\newtheorem*{thm*}{Theorem}
\newtheorem{thm}{Theorem}[section]
\newtheorem{prop}[thm]{Proposition}
\theoremstyle{definition}
\newtheorem{defn}[thm]{Definition}
\newcommand{\Ab}{{\mathbb A}}
\newcommand{\Cb}{{\mathbb C}}
\newcommand{\Gb}{\mathbb{G}}
\newcommand{\Pb}{{\mathbb P}}
\newcommand{\Qb}{{\mathbb Q}}
\newcommand{\Oc}{{\mathcal O}}
\newcommand{\SL}{\textrm{SL}}
\newcommand{\Sym}{\textrm{Sym}}
\newtheorem{theo}{Theorem}
\newtheorem{lem}[theo]{Lemma}
\theoremstyle{definition}
\newcommand{\Qbar}{\overline{\Qb}}
\newcommand{\OO}{\mathcal{O}}
\newcommand{\PGL}{\text{PGL}}
\title{Positivity of GIT heights of zero-cycles and hyperplane arrangements}
\author{Ashwath Rabindranath}
\author{William F. Sawin}
\address{Department of Mathematics, University of Michigan}
\email{ashwathr@umich.edu}
\address{Department of Mathematics, Princeton University}
\email{wsawin@math.princeton.edu}
\date{}
\begin{document}
\begin{abstract}
In this paper, we give a partial answer to a conjecture of Zhang from 1996 about the GIT height function defined in \cite{Z} for semi-stable algebraic cycles. We prove the conjecture for zero-cycles and hyperplane arrangements.
\end{abstract}
\maketitle
\vspace{-0.2in}
\section{Introduction}
\noindent In the early 1990's, as part of the development of arithmetic intersection theory and  Arakelov theory, the subject of \textit{arithmetic} geometric invariant theory arose in the works \cite{Bu} and \cite{Z} by Burnol and Zhang respectively. In \cite{Z}, a notion of \textit{GIT height function} (denoted $\hat{h}$) is introduced for \textit{Chow semi-stable} algebraic cycles $X \subset \Pb^N$. The goal of this paper is to prove the positivity of this height function when $X$ is either a zero-cycle or a hyperplane arrangement. This is a special case of the more general positivity conjecture of Zhang from \cite{Z}. In particular, we prove the following two theorems.

\begin{thm} If $X$ is a \textit{Chow semi-stable} zero-cycle, $\hat{h}(X) \geq 0$. If $X$ is \textit{Chow stable}, then $\hat{h}(X) > 0$. \end{thm}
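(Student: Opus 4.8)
The plan is to exploit the fact that the Chow form of a zero-cycle is a product of linear forms, which makes the local contributions to $\hat h(X)$ explicit at every place, and then to reduce positivity to a sharp analytic inequality at the archimedean places. Write $X=\sum_i m_iP_i$ with $d=\deg X$, and for each $i$ let $\ell_i$ be a linear form cutting out $P_i$ in the dual projective space, so that the Chow form is $R_X=\prod_i\ell_i^{m_i}$, a point of $\Pb\bigl(\Sym^d(K^{N+1})^\vee\bigr)$ where $K$ is a number field over which $X$ is defined. Zhang's construction gives a decomposition $\hat h(X)=\sum_v n_v\lambda_v(X)$ into local terms, and the goal is to show that each $\lambda_v(X)\ge 0$, with strict inequality at some $v$ when $X$ is Chow stable. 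The finite places are the easy part: using an integral model of the Chow point and Gauss's lemma — the content of $\prod_i\ell_i^{m_i}$ is the product of the contents of the $\ell_i$ — the term $\lambda_v(X)$ reduces to a manifestly non-negative quantity via the ultrametric inequality, and it vanishes precisely when $X$ has good reduction at $v$ (the $P_i$, counted with multiplicity, stay distinct modulo $v$).

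The archimedean places carry the content. Fixing $v\mid\infty$, base-changing to $\Cb$, and representing the $P_i$ by unit vectors $w_i\in\Cb^{N+1}$, the classical identity expressing the squared norm of a product of linear forms inside a symmetric power as a permanent of the corresponding Gram matrix gives
\[
\|g\cdot R_X\|^2 \;=\; c_d\,\Bigl(\prod_i\|gw_i\|^{2m_i}\Bigr)\,\mathrm{per}(M_g),
\]
where $M_g$ is the $d\times d$ Gram matrix built from the normalized vectors $gw_i/\|gw_i\|$ with multiplicities and $c_d$ is an explicit constant. Hence $\lambda_v(X)$ equals, up to an explicit constant, the logarithm of the Kempf–Ness infimum $\inf_{g\in\SL_{N+1}(\Cb)}\mathrm{per}(M_g)$; because $X$ is semistable this infimum is positive, and by the Kempf–Ness theorem it is attained at a balanced representative, i.e. one with $\sum_i m_iu_iu_i^{*}=\tfrac{d}{N+1}I_{N+1}$ — a weighted tight frame. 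Positivity of $\hat h(X)$ then reduces to a sharp lower bound for the permanent of the Gram matrix of a weighted tight frame, measured against the normalization built into Zhang's height (equivalently, against the natural model metric on the GIT quotient). Here the tight-frame condition makes the matrix with entries $\tfrac{N+1}{d}\lvert\langle u_i,u_j\rangle\rvert^{2}$, expanded by the multiplicities, doubly stochastic; combining this with the positive-semidefiniteness of the Gram matrix and a permanent inequality of van der Waerden type (in the spirit of Egorychev–Falikman, or via Gurvits's method) yields the desired bound, and one checks that equality holds exactly for the split configurations — those supported on a union of mutually orthogonal subspaces. Summing the local estimates over all $v$ and using the product formula gives $\hat h(X)\ge 0$.

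For the strict inequality it suffices to observe that the equality case of the archimedean estimate cannot occur when $X$ is Chow stable: a split configuration decomposes $\Pb^{N}$ into at least two complementary subspaces each carrying a piece of $X$ of the balanced degree, and for such an $X$ the evident one-parameter subgroup realizes equality in the Hilbert–Mumford numerical criterion, so $X$ lies on the boundary of the Chow-semistable locus and is not stable. Therefore, if $X$ is Chow stable then $\lambda_v(X)>0$ at every archimedean $v$, and since the finite contributions are non-negative, $\hat h(X)\ge\sum_{v\mid\infty}n_v\lambda_v(X)>0$. I expect the archimedean step to be the main obstacle — establishing the sharp permanent inequality for tight frames with the correct constant and equality locus, and fitting it to the normalization of Zhang's height. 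This is exactly where the product-of-linear-forms shape of the Chow form is essential: for a general cycle the Chow form is a resultant, with no comparable closed formula for its norm, which is why the method is limited to zero-cycles, and — by running the same analysis on the defining product of linear forms — to hyperplane arrangements.
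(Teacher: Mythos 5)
There is a genuine gap, and it sits exactly where you predicted: the archimedean estimate. Your identity $\|g\cdot R_X\|^2=c_d\bigl(\prod_i\|gw_i\|^{2m_i}\bigr)\mathrm{per}(M_g)$ is the permanent formula for the \emph{Bombieri--Weyl ($L^2$) inner product} on $\Sym^d(\Cb^{N+1})$, but Zhang's height is computed with the \emph{Chow metric}, which is a Mahler-measure-type norm normalized by Stoll's constant $\tfrac12 d(n+1)\sum_{j=1}^N\tfrac1j$. By Zhang's Theorem 1.6 (Proposition 3.1 of the paper), the Chow metric of a product of linear forms is exactly the product of the Fubini--Study norms of the points --- there is no permanent factor at all. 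The two norms differ by a $g$-dependent quantity, so the Kempf--Ness infimum you compute is not the local term in $\hat h$, and the normalization constant you would need to beat is not the one your permanent bound produces. Even granting the reduction, the decisive step --- ``a permanent inequality of van der Waerden type yields the desired bound, and one checks that equality holds exactly for the split configurations'' --- is asserted, not proved: van der Waerden/Egorychev--Falikman gives $\mathrm{per}(A)\ge d!/d^d$ with equality only at $A=J_d/d$, whose diagonal entries are $1/d$, whereas your doubly stochastic matrix has diagonal entries $(N+1)/d$; so its equality case is never attained for $N\ge 1$ and cannot be ``exactly the split configurations.'' The constant, the inequality, and the equality locus are precisely the content of the theorem, and none of them is established.

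For contrast, the paper sidesteps all of this analysis. It computes the height \emph{exactly} for a single configuration --- $N+1$ points spanning $\Pb^N$ --- where the archimedean term reduces to Hadamard's inequality for the matrix of coordinates (zero iff the columns are orthogonal), and the finite terms vanish by Burnol's residual-semistability criterion. It then proves sub-additivity of $\hat h$ place by place and shows, by a purely combinatorial induction on the number of distinct vectors using the Hilbert--Mumford numerical criterion, that every semistable configuration is a nonnegative rational combination of bases of $\Cb^{N+1}$; strict positivity in the stable case comes from exhibiting a basis inside $X$ that is not orthogonalized by the archimedean minimizer, so the sub-additivity inequality is strict. If you want to salvage your route, you must either redo the permanent computation for the Chow metric (where it degenerates to a product of FS norms and the ``permanent inequality'' becomes a statement about $\inf_g\sum_i m_i\log\|gw_i\|$ for balanced configurations) or control the discrepancy between the $L^2$ and Chow norms uniformly over the $\SL_{N+1}(\Cb)$-orbit; neither is routine.
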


\begin{thm} If $X$ is a \textit{Chow semi-stable} hyperplane arrangement, $\hat{h}(X) > 0$. \end{thm}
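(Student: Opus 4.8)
The plan is to reduce the statement to an explicit invariant-theoretic estimate for a product of linear forms. Write $X=\sum_i m_iH_i$ with $H_i=V(\ell_i)$, $\ell_i\in V^*$ where $V=\Cb^{N+1}$, and $d=\sum_i m_i$. Up to the identification of the Chow form of a hypersurface with its defining equation, and the projective duality between hyperplanes in $\Pb(V)$ and points of $\Pb(V^*)$, the Chow point of $X$ is $[F_X]$ with $F_X=\prod_i\ell_i^{m_i}\in\mathrm{Sym}^dV^*$; and Chow semistability translates, via the Hilbert-Mumford numerical criterion, into the combinatorial condition that $\sum_{\{i\,:\,L\subseteq H_i\}}m_i\le d\cdot\frac{N-\dim L}{N+1}$ for every linear subspace $L\subsetneq\Pb(V)$, with strict inequality in the Chow-stable case. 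Zhang's height can then be written as a sum $\hat h(X)=\sum_v\hat h_v(X)$ of local contributions, where each $\hat h_v(X)$ is the value of an extremal (intrinsic, Kempf-Ness-type) problem for $F_X$ under $\mathrm{SL}_{N+1}$ over the completion at $v$, measured against the standard Hermitian norm (at $v\mid\infty$) or the standard lattice (at finite $v$); the task is to bound this sum below.

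For the archimedean contribution I would evaluate the extremal problem using the Cartan ($KAK$) decomposition of $\mathrm{SL}_{N+1}(\Cb)$: the compact factor does not change the shape of $F_X$, and the remaining optimization over the positive diagonal torus makes $\log\|g\cdot F_X\|_v$ a concave function on a simplex, essentially a weighted combination of the logarithms of the integrals $\int_{S^{2N+1}}\prod_i|\ell_i|^{2m_i}\,d\sigma$, whose critical point is the balanced configuration. Semistability says precisely that the relevant vector of weights lies in the relative interior of the associated moment polytope, so this concave function attains an \emph{interior} maximum; comparing its value there with its value at the identity, where the $L^2$-norm of a product of linear forms on the sphere is given by a classical multinomial/beta-integral formula, produces the needed lower bound on $\hat h_v(X)$ for $v\mid\infty$. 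The combinatorial constants that enter here (reflecting the passage between $\mathrm{Sym}^dV^*$ and $(V^*)^{\otimes d}$, hence the difference between this setting and the zero-cycle one, where the relevant representation is $\mathrm{Sym}^dV$) are what make this bound \emph{strictly} positive even in the extreme semistable case, the reduced union of the $N+1$ coordinate hyperplanes; this is the source of the strict inequality in Theorem 2, whereas the ``dual'' zero-cycle is only semistable and gets $\hat h\ge 0$.

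At the finite places one must bound $\hat h_v(X)$ below, individually or globally after using the product formula to absorb place-independent constants. The feature that keeps this tractable is that the reduction of a product of linear forms modulo $v$ is again a product of linear forms, so a reduction-theoretic analysis of $F_X$ stays combinatorial: one tracks how the $v$-adic norm of $F_X$ can shrink along $\mathrm{SL}_{N+1}(\overline{\Qb_v})$ in terms of the incidences among the reductions of the $\ell_i$, and bounds it by the lattice norm using the surviving numerical inequalities. One may equivalently replace $X$ by the polystable point of its orbit closure --- for a hyperplane arrangement, a direct sum of stable arrangements along a decomposition $V=\bigoplus_j V_j$ --- and combine the stable case with an additivity property of $\hat h_v$ under such sums. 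Summing over all $v$ then gives $\hat h(X)>0$.

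I expect the main obstacle to be the finite places: unlike the archimedean place there is no continuous group to average over, so one must genuinely control the reduction of $F_X$ and the behavior of the local extremal quantity under degeneration --- this is exactly where Zhang's general conjecture is hard, and the proof must lean on the fact that ``a hyperplane arrangement reduces to a hyperplane arrangement'' together with the description of $\mathrm{SL}_{N+1}$-invariants of tuples of vectors by brackets to keep all estimates combinatorial. A secondary difficulty is pinning down Zhang's normalizations precisely enough that the combinatorial constants land on the side that yields strict positivity rather than mere non-negativity.
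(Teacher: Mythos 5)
Your plan correctly identifies two of the paper's key ingredients --- the point/hyperplane duality and the fact that strict positivity must come from a normalization constant separating the hyperplane-arrangement height from the dual zero-cycle height --- but it stops short of a proof at exactly the points where the real work lies. The entire conclusion rests on an unnamed ``combinatorial constant'' being strictly positive, and you never compute it. In the paper the comparison is made precise: choosing compatible sections $s$, $s'$ with $s(X)=s'(\varphi(X))$, one finds $\hat h(\varphi(X))=\hat h(X)+dC'$ with
$C' = \tfrac{1}{2}(N-1)\sum_{j=1}^N \tfrac1j + \int_{S(\Cb^{N+1})^{N}} \log|v_1\wedge\cdots\wedge v_N|\,dx$,
and the positivity of $C'$ is a genuine computation (the integral term is bounded below by Stoll's integral $\int \tfrac12\log|x_N|^2$, which exactly cancels the harmonic-sum term, with strict inequality). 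A priori this constant could have had either sign, so asserting its positivity is the gap, not a detail. Relatedly, your archimedean analysis is off at a technical point: for a \emph{strictly semistable} configuration the weight vector lies on the \emph{boundary} of the moment polytope (the interior characterizes stability), so the concave function on the torus need not attain an interior maximum and the Kempf--Ness critical-point argument does not directly apply; the paper avoids this entirely by never solving the extremal problem for a general semistable cycle, instead using sub-additivity of $\hat h$ and an explicit combinatorial decomposition of any semistable configuration of vectors into a nonnegative rational combination of bases of $\Cb^{N+1}$, for each of which the height is computed to be exactly $0$ (Hadamard's inequality at $\infty$, Burnol's residual-semistability criterion at finite places).

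The finite places, which you flag as the main obstacle, are in fact the easiest part once the duality is set up correctly: the linear map taking Chow coordinates of the zero-cycle to Chow coordinates of the dual arrangement is composition with $x_1\otimes\cdots\otimes x_N\mapsto x_1\wedge\cdots\wedge x_N$, which is defined over $\Zb$ and surjective mod $p$, so the $v$-adic norms of the two Chow points agree exactly and no new reduction theory is needed. Your proposed alternative (passing to the polystable point of the orbit closure and doing a bracket-combinatorial analysis of the reduction of $\prod_i\ell_i^{m_i}$) is not carried out and would require an additivity statement for the local heights under direct-sum degenerations that you do not prove. In short: the route you sketch could perhaps be made to work, but as written it is a program with the decisive estimates missing, whereas the paper's proof is a clean reduction to the zero-cycle theorem plus one explicit integral inequality.
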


\noindent The method of proof is summarized briefly as follows: we first prove that the GIT height of the zero-cycle of $N+1$ distinct points in general linear position in $\Pb^N$ is $0$. Then, using the sub-additivity of the height function and breaking a general  semi-stable zero-cycle into pieces, each of which look like the above zero-cycle (up to a rational coefficient), we get nonnegativity for  semi-stable zero-cycles. We further show that the equality condition for the semi-stable case is not satisfied in the stable case, thereby checking strict positivity for stable zero-cycles. Finally we use the duality between points and hyperplanes along with a careful comparison of metrics to get the result for hyperplane arrangements.
\vspace{0.1in}

\noindent The structure of the paper is as follows. In section \ref{back}, we review some notions and recall the definition of the GIT height function. In section \ref{pts}, we prove positivity for the case of zero-cycles. In section \ref{hyp}, we prove positivity for the case of hyperplane arrangements. 

\subsection*{Acknowledgements}  This work originated in the first author's undergraduate thesis at Princeton advised by Shou-Wu Zhang. We are grateful to him for useful discussions and comments. The second author was supported by the NSF Grant DGE-1148900.

\section{GIT Height Function}\label{back}
\noindent In this section, we review notions of (semi)-stability in geometric invariant theory and define the GIT height function. We begin by fixing some notation. In what follows, let $K$ be a number field and let $M_K$ denote the set of places of $K$. We write the set $M_K = M_{K,f} \sqcup M_{K,\infty}$ where $M_{K,f}$ is the set of non-Archimedean places and $M_{K,\infty}$ is the set of Archimedean places. For any place $v \in M_K$, let $\overline{K_v}$ be the algebraic closure of the completion of $K$ at $v$. There is a unique normalized absolute value $| \cdot |_v$ on $\overline{K_v}$ that extends the $v$-adic absolute value.

\begin{defn}[(Semi)-stability, see \cite{Mum} for more details] For $X$ a scheme and $G$ an algebraic group acting on $X$ whose action is linearized by $L$ a line bundle on $X$, a point $x \in X$ is said to be 
\begin{itemize}
\item \textit{semi-stable} if for some $m \gg 0$, there exists a section $ s \in H^0(X,L^m)$ invariant under the action of $G$ such that $s(x) \ne 0$ and $X_s := \{ p \in X : s(p) \ne 0 \}$ is affine. 

\item \textit{stable} if it is semi-stable, has finite stabilizer in $G$ and the action of $G$ on $X_s$ is closed. 
\end{itemize}
\end{defn}

\noindent Let $V$ be a vector space of dimension $N+1$ over $\Qbar$. Let $X$ be an effective algebraic cycle of pure dimension $n$ in $\Pb(V)$ with degree $d := (X \cdot \Oc(1)^{n})$ and defined over a number field $K$. Using Chow coordinates \cite{DS}, we see that $X$ corresponds to a point $x$ in $Y :=\Pb((\Sym^dV)^{\otimes (N+1)})$. We have an action of $\SL(V)$ via its quotient $\PGL(V)$ on $\Pb(V)$. This is linearized by $\Oc(1)$ on $\Pb(V)$. This group action also induces an action on $\Pb((\Sym^dV)^{\otimes (N+1)})$ which is linearized by $\Oc_Y(1)$. We say that $X$ is  \textit{Chow stable} (resp.\ \textit{Chow semi-stable}) if the corresponding Chow point is stable (resp.\  \textit{semi-stable}). We omit the adjective Chow in what follows.
\vspace{0.1in}

\noindent Note that $Y$ has a homogeneous coordinate $(z_\alpha)$, where $\alpha = (\alpha_{i,j})$ for $0 \leq i \leq n$, $0 \leq j \leq N$, and $\alpha_{i,j} \geq 0$ are multi-indices for monomials of degree $(d, \ldots, d)$ on $\left(\Pb^N\right)^{n+1}$. We now define the Chow metric on $\Oc_Y(1)$.

\begin{defn}[Chow metric] For a point $z$ of $Y$ and $s$ a section of $\Oc_Y(1)$, we define the Chow metric
$$\log ||s||_{\text{Ch}}(z) = \log |s(z)|  - \frac{1}{2}d(n+1)\sum_{j=1}^N \frac{1}{j} - \int_{S(\Cb^{N+1})^{n+1}} \log \left|\sum_\alpha z_\alpha x_\alpha  \right|dx $$ where $x_\alpha = \displaystyle \prod_{i,j}x_{i,j}^{\alpha_{i,j}}$ are monomials on $\Cb^{(N+1)(n+1)}$ and $S(\Cb^{N+1})$ is the unit sphere on $\Cb^{N+1}$.

\end{defn}

\noindent We define the GIT height function of a Chow semistable algebraic cycle $X$ in terms of its archimedean and non-archimedean components. Let $s \in H^0(Y, \Oc_Y(m))$ be any section for any $m > 0$ such that $s(X) \ne 0$. For computations, we often use sections that are invariant under the $\SL(V)$-action. 

\begin{defn}[GIT height, \cite{Z}] The GIT height function is defined as follows: 

$$\hat{h}(X) := \frac{1}{[K:\Qb]} \sum_{v \in M_K} \inf_{g \in \SL(N+1, \overline{K_v})} \left( \frac{-\log||s||_v^m(g \cdot X)}{m} \right),$$ 

where for $v \in M_{K,f}$ we have 
	$$|| s||_v^m(x) := \frac{|s(x)|_v^m}{\displaystyle \max_\alpha |x_\alpha|^m}$$ 

and for $v \in M_{K, \infty}$ we have 
	$$ ||s||_v^m(x) := ||v^*s||_{\text{Ch}}^m(x).$$

\end{defn}

\section{Positivity for zero-cycles}\label{pts}

\noindent In this section, we calculate the GIT height function for certain zero-cycles in $\Pb^N$ defined over $\Qbar$. We then use these calculations to prove the positivity conjecture for semi-stable zero-cycles in $\Pb^N$. Consider a zero-cycle $X = \displaystyle \sum_{i=1}^d P_i$  where $P_i := [P_{i,0} : \ldots : P_{i,N}]$ are (not necessarily distinct) points in $\Pb^N$. We have the Chow morphism $$\varphi : (\Pb(V))^d \to \Pb(\Sym^d(V))$$ defined by sending the ordered tuple of points $(P_1, \ldots, P_d)$ to the corresponding Chow coordinate. It's easy to see that $\varphi^*\Oc_{\Pb(\Sym^d(V))}(1) = \Oc(1, \ldots, 1)$. We equip $(\Pb(V))^d$ with the product of the pull-backs of individual Fubini-Study metrics on $\Pb(V)$ along the $i$-th projection $\pi_i$. The following proposition is a special case of Theorem 1.6 of \cite{Z}. It characterizes the Chow metric for zero-cycles.

\begin{prop}[Theorem 1.6,\cite{Z}]\label{fubinistudy} We have that $$\varphi^*|| \cdot ||_{\text{Ch}} = \prod_{j= 1}^d \pi_j^*|| \cdot ||_{\text{FS}}$$ where $|| \cdot ||_{\text{FS}}$ is the Fubini-Study metric on $\Pb(V)$.\end{prop}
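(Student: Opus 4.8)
The plan is to evaluate both metrics on one explicit section and then invoke continuity. Throughout, fix the standard Hermitian structure on $V\cong\Cb^{N+1}$. For a zero-cycle ($n=0$) the space $Y$ is $\Pb(\Sym^d V)$, a point $z$ of which is recorded by the coefficients $(z_\alpha)$ of a degree-$d$ form $\sum_\alpha z_\alpha x_\alpha$ in $x=(x_0,\dots,x_N)\in\Cb^{N+1}$, and the Chow morphism sends $(P_1,\dots,P_d)$ to the point whose associated form is $\prod_{i=1}^d\langle\hat P_i,x\rangle:=\prod_{i=1}^d\sum_{j=0}^N P_{i,j}x_j$, for any choice of affine representatives $\hat P_i\in V$ of the $P_i$. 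The key structural observation is then that the integral term in the Chow metric splits as a sum over the points: since $\sum_\alpha z_\alpha x_\alpha=\prod_i\langle\hat P_i,x\rangle$, taking logarithms turns the product into a sum and
$$\int_{S(\Cb^{N+1})}\log\Bigl|\sum_\alpha z_\alpha x_\alpha\Bigr|\,dx=\sum_{i=1}^d\int_{S(\Cb^{N+1})}\log\bigl|\langle\hat P_i,x\rangle\bigr|\,dx,$$
so everything reduces to the one-point case.

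The one genuine input is the classical identity $\int_{S(\Cb^{N+1})}\log|\langle v,x\rangle|\,dx=\log\|v\|-\tfrac12\sum_{j=1}^N\tfrac1j$ for every nonzero $v\in V$, where $dx$ is the uniform probability measure on the sphere. I would prove it by first using homogeneity in $v$ and unitary invariance of $dx$ to reduce to $v=e_1$, and then computing $\int_{S(\Cb^{N+1})}\log|x_1|\,dx$ from the fact that $|x_1|^2$ has the $\mathrm{Beta}(1,N)$ law under $dx$: the integral becomes $\tfrac12\int_0^1(\log t)\,N(1-t)^{N-1}\,dt$, which equals $-\tfrac12\sum_{j=1}^N\tfrac1j$ by differentiating the Beta function under the integral sign (equivalently, it is $\tfrac12(\psi(1)-\psi(N+1))$). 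This Beta/digamma computation is the only step I expect to require real care; the rest is bookkeeping, and the harmonic-sum constant $\tfrac12 d(n+1)\sum_{j=1}^N\tfrac1j$ in the definition of the Chow metric is precisely what is needed for the answer to come out clean.

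Granting this, pick the section $s=z_{(d,0,\dots,0)}$ of $\Oc_Y(1)$, i.e.\ the linear form extracting the coefficient of $x_0^d$, and let $z=\varphi(P_1,\dots,P_d)$ at a point with all $P_{i,0}\neq 0$. Then $s(z)=\prod_i P_{i,0}$, and feeding the display above together with the Beta identity into the definition of $\|\cdot\|_{\text{Ch}}$, the constants $\tfrac12 d\sum_{j=1}^N\tfrac1j$ cancel and one is left with $\log\|s\|_{\text{Ch}}(z)=\sum_{i=1}^d\bigl(\log|P_{i,0}|-\log\|\hat P_i\|\bigr)$. On the other hand $\varphi^*s$ is the section $x_0\boxtimes\cdots\boxtimes x_0$ of $\Oc(1,\dots,1)$, and the standard formula $\|x_0\|_{\text{FS}}(P)=|P_0|/\|\hat P\|$ gives $\log\bigl(\prod_{j}\pi_j^*\|\cdot\|_{\text{FS}}\bigr)(\varphi^*s)(P_1,\dots,P_d)=\sum_{j=1}^d\bigl(\log|P_{j,0}|-\log\|\hat P_j\|\bigr)$, the same quantity.

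Finally, $\varphi^*\|\cdot\|_{\text{Ch}}$ and $\prod_j\pi_j^*\|\cdot\|_{\text{FS}}$ are two continuous metrics on the line bundle $\Oc(1,\dots,1)$ over $(\Pb V)^d$; their ratio is a well-defined positive continuous function, independent of the chosen section, which by the previous paragraph equals $1$ on the dense open set $\{P_{j,0}\neq 0\ \text{for all }j\}$ on which $\varphi^*s$ is nonvanishing, hence equals $1$ everywhere. This proves the proposition. (One could instead check that both metrics have curvature form $\sum_j\pi_j^*\omega_{\text{FS}}$ and then pin down the remaining constant, but that constant still has to be computed via the same Beta integral, so the direct route seems cleanest.)
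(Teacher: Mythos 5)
Your proposal is correct, but note that the paper does not prove this proposition at all: it is quoted as a special case of Theorem~1.6 of Zhang's paper \cite{Z} and used as a black box. So what you have written is a self-contained proof of the cited input rather than an alternative to an argument in the paper. Your route --- factor the Chow form of a zero-cycle as $\prod_i\langle \hat P_i,x\rangle$ so the spherical integral splits into a sum over the points, evaluate the one-point integral via the $\mathrm{Beta}(1,N)$ law of $|x_0|^2$ to get $\int_{S(\Cb^{N+1})}\log|\langle v,x\rangle|\,dx=\log\|v\|-\tfrac12\sum_{j=1}^N\tfrac1j$, check the two metrics agree on the explicit section $z_{(d,0,\dots,0)}$, and conclude by continuity on the dense open set where that section is nonvanishing --- is essentially the standard (Stoll--Zhang) computation; indeed the same spherical integral reappears later in the paper, attributed to Stoll, in the hyperplane-arrangement section. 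Two small points worth making explicit if you write this up: first, the final density step silently uses that $z\mapsto\int_{S(\Cb^{N+1})}\log\bigl|\sum_\alpha z_\alpha x_\alpha\bigr|\,dx$ is continuous (standard, and implicit in the paper's assertion that $\|\cdot\|_{\text{Ch}}$ is a metric, but it is a genuine input); alternatively you can avoid it by replacing your single section with the family of ``evaluation at $u$'' sections $s_u(z)=\sum_\alpha z_\alpha u_\alpha$, for which $s_u(\varphi(P_1,\dots,P_d))=\prod_i\langle\hat P_i,u\rangle$ still factors and which have no common zero. Second, you should say once that $dx$ denotes the uniform \emph{probability} measure on the sphere, since the normalizing constant $\tfrac12 d(n+1)\sum_{j=1}^N\tfrac1j$ in the definition of the Chow metric only cancels under that convention.
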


\noindent Since we are interested in computing infima over the various local $\SL_{N+1}$-orbits, we work with a GIT quotient of $\operatorname{Sym}^{d}(\mathbb P^N)$ by the action of $\SL_{N+1}$. Since $\operatorname{Sym}^d (\mathbb P^N)$ is itself a GIT quotient, we will do the GIT in one step. It's well-known that that $\mathbb P^N = \mathbb A^{N+1} / \mathbb G_m$, where $\mathbb G_m$ acts by scaling the coordinates of $\Ab^{N+1}$ and acts on the trivial line bundle by the standard character of $\mathbb G_m$. Taking symmetric powers gives us 

$$\operatorname{Sym}^d (\mathbb P^N) =\left( \mathbb P^N\right)^d/S_d = \mathbb A^{d(N+1)} / \left (\mathbb G_m^d\right) \ltimes S_d.$$ 

\noindent Here the action on the trivial line bundle comes from multiplying the $d$ characters of each of the individual $\mathbb G_m$'s together. The action of $\SL_{N+1}$ on $\Pb^N$ is linearized by $\OO_{\Pb^N}(1)$ and so we can write the GIT quotient of $\Sym^d(\Pb^N)$ by $\SL_{N+1}$ as follows:
$$\operatorname{Sym}^d(\mathbb P^N)/\SL_{N+1} = \mathbb A^{d(N+1)} / \left (\mathbb G_m^d\right) \ltimes S_d \times \SL_{N+1},$$ 

\noindent Explicitly, this space can be described as a quotient of the affine space of $(N+1) \times d$ matrices, where $\SL_{N+1}$ acts by left multiplication, $\mathbb G_m^d$  acts by right multiplication, and $S_d$ acts by permuting the columns. 
We are interested in the ring of invariant sections
$$R_{N,d} := \left(\bigoplus_{m \geq 0} H^0(\Pb^{d(N+1)-1},\OO_{\Pb^{d(N+1)-1}}(m))\right)^{\left(\left (\mathbb G_m^d\right) \ltimes S_d \times \SL_{N+1}\right)}$$ 

\noindent In other words, $R_{N,d}$ comprises precisely those regular functions on the space of $(N+1) \times d$ matrices which are invariant under left multiplication by $\SL_{N+1}$, permutations of the columns by $S_d$, and the right multiplication by diagonal matrices with determinant $1$. We note that the ring of invariants of just the $\SL_{N+1}$ action on $(N+1)\times d$ matrices is generated by the $(N+1)\times (N+1)$ minors. For $d=N+1$, there is just one $(N+1) \times (N+1)$ minor, namely the determinant $\Delta$, which is invariant under the action of $\mathbb G_m^{d}$. The action of $\sigma \in S_d$ by permuting the columns of $(N+1) \times (N+1)$ matrices multiplies $\Delta$ by $\text{sign}(\sigma) \in \{ \pm 1 \}$. It follows that $\Delta^2$ is invariant under all three actions and in fact generates the ring of invariants.

\begin{thm}\label{base case} If $C$ is a zero-cycle of $N+1$ distinct points in $\mathbb P^N$, we have that $\hat{h}(C) = 0$. In particular, the only elements of $\SL_{N+1}(\Cb)$ that minimize any given Archimedean contribution to the height are those that send the $N+1$ column vectors to $N+1$ distinct pairwise orthogonal vectors.  \end{thm}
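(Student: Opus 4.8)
The plan is to compute $\hat h(C)$ place by place using the explicit GIT description above. For a zero-cycle $C = \sum_{i=1}^{N+1} P_i$ of $N+1$ distinct points, lift each $P_i$ to a column vector in $\overline{K_v}^{\,N+1}$; then $C$ is represented by an $(N+1)\times(N+1)$ matrix $A$ with nonzero determinant (distinctness of the points alone does not force $\det A \neq 0$, but the Chow point of $C$ is the vector of coefficients of the form $\prod_i \langle \ell, P_i\rangle$ on $(\Pb^N)^*$, whose leading datum is $\Delta = \det A$; one checks $C$ is semistable iff $\Delta \neq 0$, i.e.\ the points are in general linear position, and I would insert a sentence justifying that the $N+1$ distinct points we are handed are automatically in general position, or else restrict to that case which is what ``general linear position'' in the introduction refers to). Since $\Delta^2$ generates $R_{N,N+1}$, the natural invariant section to feed into the definition of $\hat h$ is $s = \Delta^2$, of degree $m=2$ in the Chow coordinates. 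The height is then
$$
\hat h(C) = \frac{1}{[K:\Qb]} \sum_{v\in M_K} \inf_{g\in \SL_{N+1}(\overline{K_v})} \Bigl( \tfrac{-1}{2}\log \|\Delta^2\|_v^2(g\cdot A)\Bigr),
$$
and because $\Delta^2$ is $\SL_{N+1}$-invariant, $|\Delta^2(g\cdot A)|_v = |\Delta^2(A)|_v = |\det A|_v^2$ is independent of $g$; only the denominator of $\|\cdot\|_v$ moves under $g$.

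For the non-Archimedean places the denominator is $\max_\alpha |x_\alpha|_v^2$ over Chow coordinates $x_\alpha$. Each $x_\alpha$ is a polynomial in the matrix entries, and $g$ ranges over $\SL_{N+1}(\overline{K_v})$; one shows $\inf_g \max_\alpha |x_\alpha(g\cdot A)|_v$ equals $|\det A|_v$, so the $v$-adic summand vanishes. The cleanest way to see this: after an $\SL_{N+1}(\overline{K_v})$ change of basis we may assume $A$ is diagonal (over $\overline{K_v}$ we can even make the diagonal entries equal up to roots of unity, since $\overline{K_v}$ is algebraically closed), whence $\max_\alpha |x_\alpha|_v = |\det A|_v$ exactly; and a scaling/Hadamard-type inequality shows this is optimal, so each finite place contributes $0$.

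For an Archimedean place $v$, Proposition~\ref{fubinistudy} identifies $\varphi^*\|\cdot\|_{\mathrm{Ch}}$ with $\prod_{j}\pi_j^*\|\cdot\|_{\mathrm{FS}}$, so the Archimedean summand becomes (up to the normalizing constant, which I would track carefully) an infimum of $-\log$ of a product of Fubini--Study norms of the $g\cdot P_i$ against the fixed form $\Delta^2$. Concretely, writing $v_1,\dots,v_{N+1}$ for the columns of $gA\in\SL_{N+1}(\Cb)$, the quantity to minimize reduces to
$$
-\log |\det(v_1,\dots,v_{N+1})|^2 + \sum_{i=1}^{N+1}\log \|v_i\|^2,
$$
and Hadamard's inequality gives $|\det(v_1,\dots,v_{N+1})|^2 \le \prod_i \|v_i\|^2$ with equality iff the $v_i$ are pairwise orthogonal. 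Hence the expression is $\ge 0$, the infimum is $0$, attained exactly on $g$ sending the columns to pairwise orthogonal vectors --- which after rescaling by $\SL_{N+1}$ may be taken of equal length --- and equal to $0$ otherwise. Summing over all places gives $\hat h(C)=0$, and the equality analysis of Hadamard's inequality is precisely the ``in particular'' clause. The one subtlety I expect to be the main obstacle is bookkeeping: matching the explicit normalization constant $-\tfrac12 d(n+1)\sum_{j=1}^N \frac1j$ in the Chow metric (here $d=N+1$, $n=0$) against the Fubini--Study product so that the Archimedean infimum is exactly $0$ rather than some nonzero constant, and checking the compatibility of the degree-$m=2$ section $\Delta^2$ with the $1/m$ in the definition; everything else is Hadamard's inequality plus an elementary non-Archimedean estimate.
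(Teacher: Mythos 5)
Your proposal is correct and follows essentially the same route as the paper: reduce to the single invariant $\Delta^2$, get the Archimedean contribution from Hadamard's inequality via Proposition~\ref{fubinistudy} (whose normalization already absorbs the constant $-\tfrac12 d(n+1)\sum_j 1/j$ you were worried about), and read off the equality case for the ``in particular'' clause. The only cosmetic difference is at the finite places, where you prove $\inf_g \max_\alpha |x_\alpha(g\cdot A)|_v = |\det A|_v$ directly by diagonalizing and using that $\Delta^2$ is an integral polynomial of degree $2$ in the Chow coordinates, whereas the paper instead invokes Burnol's criterion that minimality in the $\SL_{N+1}(\overline{K_v})$-orbit equals residual semistability and evaluates at the identity matrix.
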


\begin{proof} By the above argument, we know that $C$ corresponds to an $(N+1) \times (N+1)$ matrix. Since $R_{N,N+1}$ is generated by the square of the determinant, $C$ is semi-stable if and only if the corresponding $(N+1) \times (N+1)$ matrix is invertible. Furthermore since the left action of $\SL_{N+1}$ on the set of $(N+1) \times (N+1)$ invertible matrices is transitive up to scaling by the $\Gb_m$-action, we have a unique semi-stable equivalence class and we can pick an arbitrary representative, say the identity matrix $I_{N+1}$.
\vspace{0.1in}

\noindent We first compute the non-Archimedean portion of the GIT height. By Proposition 1 of \cite{Bu}, for a fixed $v \in M_{K,f}$, we have that minimality in the $\SL_{N+1}(\overline{K_v})$-orbit is the same as residual semistability. Since $I_{N+1}$ has determinant $1$, its reduction modulo $v$ is nonzero and is semistable. Hence, the non-Archimedean portion of the GIT height is \begin{align*} \frac{1}{[K:\Qb]} \sum_{v \in M_{K,f}} \inf_{g \in \SL(N+1, \overline{K_v})} \left( \frac{-\log||\Delta^2||_v^{2(N+1)}(g \cdot I_{n+1})}{2(N+1)} \right)  &= \frac{1}{[K:\Qb]} \sum_{v \in M_{K,f}}  \left( \frac{-\log||\Delta^2||_v^{2(N+1)}(I_{n+1})}{2(N+1)} \right) \\ &= 0 \end{align*}

\noindent Now, we fix $v \in M_{K,\infty}$ and compute the $v$-component of the height. By Proposition \ref{fubinistudy}, this is equal to $$ \inf_{A = (a_{ij}) \in \SL_{N+1}(\Cb)} \frac{\displaystyle \frac{1}{2}\sum_{i=1}^{d} \log \left(\sum_{j=1}^{N+1} |a_{ij}|^2 \right)}{d}$$ which is nonnegative by Hadamard's inequality. If $A$ is the identity, we get $0$, so the infimum is $0$. Summing up over all $v \in M_{K,\infty}$, it follows that the non-Archimedean portion of the GIT height of $C$ is $0$ and hence $\hat{h}(C) = 0$.
\vspace{0.1in}

\noindent Because Hadamard's inequality is sharp if and only if the columns of the matrix are orthogonal, only these matrices minimize the local contribution.
\end{proof}

\noindent The following proposition checks that the GIT height is sub-additive. 
\begin{prop}\label{additivity} For $C_1$ and $C_2$ semi-stable algebraic cycles of dimension $n$ on $\mathbb P^N$, we have \[\hat{h}(C_1+ C_2) \geq \hat{h}(C_1) + \hat{h}(C_2) \]

\noindent Moreover, the inequality is sharp only if, at each place, each element of $\SL_{N+1}$ that minimizes the local contribution to the height of $C_1+C_2$ also minimizes the local contributions of both $C_1$ and $C_2$. \end{prop}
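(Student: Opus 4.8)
\noindent The plan is to reduce the whole statement to one structural fact --- \emph{the Chow form of a sum of cycles of the same dimension is the product of the Chow forms} --- together with the triviality $\inf_g(f+h)\ge\inf_g f+\inf_g h$. Write $R_C$ for a Chow form of $C$. Since $\dim C_1=\dim C_2=n$, the forms $R_{C_1},R_{C_2}$ are multihomogeneous in the \emph{same} $n+1$ groups of $N+1$ dual variables, of multidegrees $(d_1,\dots,d_1)$ and $(d_2,\dots,d_2)$, and $R_{C_1+C_2}=R_{C_1}\cdot R_{C_2}$ up to a nonzero scalar; equivalently the multiplication-of-Chow-forms map $\mu\colon Y_1\times Y_2\to Y$ (a morphism near the relevant points) is $\SL_{N+1}$-equivariant, satisfies $\mu^{*}\OO_Y(1)=\OO_{Y_1}(1)\boxtimes\OO_{Y_2}(1)$, and carries the pair of Chow points of $C_1,C_2$ to the Chow point of $C_1+C_2$. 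Two preliminary remarks: Hilbert--Mumford weights add under $\boxtimes$, so semistability of $C_1$ and $C_2$ forces that of $C_1+C_2$ and $\hat h(C_1+C_2)$ is defined; and $\hat h(X)$ is by definition the sum over places of local contributions $\lambda_v(X)=\inf_{g\in\SL_{N+1}(\overline{K_v})}\bigl(-\tfrac1m\log\|s\|_v^{m}(g\cdot X)\bigr)$ --- the decomposition into local terms depends on the chosen section, but the sum does not, so it suffices to prove $\lambda_v(C_1+C_2)\ge\lambda_v(C_1)+\lambda_v(C_2)$ at each place for one compatible choice of auxiliary sections for $C_1$, $C_2$ and $C_1+C_2$.

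\smallskip
\noindent Fix $v$, and for fixed $g$ write $\phi^{X}(g):=-\tfrac1m\log\|s\|_v^{m}(g\cdot X)$. The heart of the matter is the \emph{exact} identity $\phi^{C_1+C_2}(g)=\phi^{C_1}(g)+\phi^{C_2}(g)$ for a suitably compatible choice of sections. For $v$ non-Archimedean this is Gauss's lemma: the normalizing factor $\max_\alpha|x_\alpha|_v$ is the Gauss norm of the Chow form, and the Gauss norm is multiplicative on products of polynomials over $\overline{K_v}$, so the non-Archimedean norm transports multiplicatively along $\mu$ and, dividing by the additive degree, the defects add. For $v$ Archimedean I would examine the three terms of the Chow metric at $z=\mathrm{Chow}(g\cdot(C_1+C_2))$ separately: the term $\tfrac12 d(n+1)\sum_{j=1}^{N}\frac1j$ is linear in $d=d_1+d_2$; the integral $\int_{S(\Cb^{N+1})^{n+1}}\log\bigl|\sum_\alpha z_\alpha x_\alpha\bigr|\,dx$ equals $\int\log\bigl|R_{g\cdot C_1}(x)\,R_{g\cdot C_2}(x)\bigr|\,dx=\int\log|R_{g\cdot C_1}|+\int\log|R_{g\cdot C_2}|$, because the logarithm turns the product into a sum and --- crucially, using $\dim C_1=\dim C_2$ --- the integration domain is the \emph{same} $(n+1)$-fold product of spheres for both cycles; and the residual $\log|s(z)|$ term splits once the auxiliary sections are chosen compatibly through $\mu$ (for instance via the multiplicativity of the Chow form itself). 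The scalar ambiguity of $R_C$ is harmless because the combination $-\log\|\cdot\|_v$ is scaling invariant.

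\smallskip
\noindent Granting $\phi^{C_1+C_2}=\phi^{C_1}+\phi^{C_2}$, the rest is soft:
$$\lambda_v(C_1+C_2)=\inf_g\bigl(\phi^{C_1}(g)+\phi^{C_2}(g)\bigr)\ge\inf_g\phi^{C_1}(g)+\inf_g\phi^{C_2}(g)=\lambda_v(C_1)+\lambda_v(C_2),$$
and summing over $v$ and dividing by $[K:\Qb]$ gives $\hat h(C_1+C_2)\ge\hat h(C_1)+\hat h(C_2)$. For the sharpness clause: if equality holds globally then, the inequality being term-by-term, it is an equality in each $\lambda_v$, i.e. $\inf_g(\phi^{C_1}+\phi^{C_2})=\inf_g\phi^{C_1}+\inf_g\phi^{C_2}$ at every $v$; hence whenever $g_0$ realizes the infimum for $C_1+C_2$ at $v$ one has $\phi^{C_1}(g_0)+\phi^{C_2}(g_0)=\inf_g\phi^{C_1}+\inf_g\phi^{C_2}$ with each summand no smaller than its own infimum, forcing $\phi^{C_i}(g_0)=\inf_g\phi^{C_i}$ for $i=1,2$: $g_0$ minimizes the local contribution of each of $C_1$ and $C_2$, as claimed.

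\smallskip
\noindent The step I expect to be the main obstacle is the Archimedean bookkeeping: making precise how the $\SL_{N+1}$-invariant data (section and metric) on $Y$ factor, via $\mu$, through the corresponding data on $Y_1$ and $Y_2$, so that the $\log|s(z)|$ contribution visibly splits and the normalization $-\tfrac12 d(n+1)\sum 1/j$ absorbs exactly the right amount with no spurious additive constant left in $-\log\|\cdot\|_v$. A minor additional point, if the sharpness clause is to be read with ``minimizes'' literally rather than ``infimizes'': one should observe that for semistable points these infima over $\SL_{N+1}(\overline{K_v})$ are in fact attained --- by a Kempf--Ness argument at the Archimedean places and by residual semistability (Proposition~1 of \cite{Bu}) at the finite ones.
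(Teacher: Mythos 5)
Your proposal is correct and follows essentially the same route as the paper: multiplicativity of Chow forms under addition of cycles makes each local term exactly additive (for compatibly chosen sections), and then $\inf(f+h)\ge\inf f+\inf h$ gives both the inequality and the sharpness clause. If anything, you are more explicit than the paper about the non-Archimedean normalization (Gauss's lemma for $\max_\alpha|x_\alpha|_v$) and the Archimedean bookkeeping, which the paper's proof passes over quickly.
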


\begin{proof} When we add two algebraic cycles, their Chow points are multiplied using the multiplication map $$\Sym^d V \otimes \Sym^e V \to \Sym^{d+e} V,$$ hence $$ (\Sym^d V) ^{ \otimes n+1} \otimes (\Sym^e  V^{ \otimes n+1}) \to (\Sym^{d+e} V)^{\otimes n+1}.$$ So it is clear that $\displaystyle \sum_\alpha z_\alpha x_\alpha$ multiplies when we add two cycles, hence $\displaystyle \log \left | \sum_\alpha z_\alpha x_\alpha\right |$ is additive. 

\noindent To ensure that $\displaystyle \log |s(z)|$ is additive, consider the map $$f : \Pb(\Sym^d V ^{ \otimes n+1}) \times \Pb(\Sym^e V ^{ \otimes n+1}) \to \Pb(\Sym^{d+e} V ^{ \otimes n+1}).$$ Since this is linear, we have that $f^*\Oc(1) = \Oc(1,1)$. If we have a non-vanishing section $s(C_1 + C_2) \ne 0$, we choose the corresponding sections for $C_1$ (respectively $C_2$) by pulling back along $f$ and then along $$f' : \Pb(\Sym^d V ^{ \otimes n+1}) \times \{ C_2 \} \to \Pb(\Sym^{d+e} V ^{ \otimes n+1}).$$ Hence $$\log ||s||_{\text{Ch}}(C_1 + C_2) = \log ||s||_{\text{Ch}}(C_1 )+ \log ||s||_{\text{Ch}}( C_2)$$
Finally, observe that since $$\inf_{a \in A, b \in B}( a+ b) \geq (\inf_{a \in A} a) +   (\inf_{b \in B} b),$$ so $$ \inf_{g \in \SL(N+1, \overline{K_v})} \left( \frac{-\log||s||_v^m(g \cdot C_1 + g \cdot C_2 )}{m} \right)$$ $$\geq  \inf_{g_1 \in \SL(N+1, \overline{K_v})} \left( \frac{-\log||s||_v^m(g_1 \cdot C_1)}{m} \right) +  \inf_{g_2 \in \SL(N+1, \overline{K_v})} \left( \frac{-\log||s||_v^m(g_2 \cdot C_2)}{m} \right)$$ and by summing over $v$, we have the desired inequality. Clearly this inequality is only sharp if the minimizing values of $g$ for $C_1$ and $C_2$ are the same.

\end{proof}

\noindent The following proposition tells us precisely when a degree $d$ zero-cycle is (semi-)stable. 

\begin{prop}[Proposition 3.4, \cite{Mum}]\label{H-M} If $C$ is a degree $d$ zero-cycle on $\mathbb P^N$, it is semi-stable if and only if for any $k > 0$, no more than $dk/(N+1)$ of the points lie in any $(k-1)$-dimensional subspace of $\mathbb P^N$. In terms of column vectors, $C$ is semi-stable if and only if no more than $dk/(N+1)$ of the vectors lie in any $k$-dimensional subspace of $\mathbb C^{N+1}$. $C$ is stable if and only if all the inequalities are strict. \end{prop}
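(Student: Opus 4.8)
Here is how one recovers Mumford's Proposition 3.4, cited above. The plan is to apply the Hilbert--Mumford numerical criterion to the Chow point of $C$. Recall that the Chow point of the zero-cycle $C=\sum_{j=1}^{d}P_{j}$ is the class $[\,v_{1}\cdots v_{d}\,]\in\Pb(\Sym^{d}V)$, where $v_{j}\in V$ represents $P_{j}$ and the product is taken in the symmetric algebra, and that the induced $\SL(V)$-action is linearized by $\Oc(1)$. By the Hilbert--Mumford criterion (valid since $\SL(V)$ is reductive), $C$ is semi-stable (resp.\ stable) if and only if $\mu(C,\lambda)\ge 0$ for every one-parameter subgroup $\lambda$ of $\SL(V)$ (resp.\ $\mu(C,\lambda)>0$ for every nontrivial $\lambda$), so the whole proof is the computation of these weights.

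To compute $\mu(C,\lambda)$, I would diagonalize: pick a basis $e_{0},\dots,e_{N}$ of $V$ with $\lambda(t)e_{i}=t^{r_{i}}e_{i}$ and $\sum_{i}r_{i}=0$, and decompose each $v_{j}=\sum_{s}v_{j}^{(s)}$ into $\lambda$-weight components. Writing $w_{j}:=\min\{\,s:v_{j}^{(s)}\ne 0\,\}$ for the smallest $\lambda$-weight ``seen'' by $P_{j}$, the claim is that the minimal weight occurring in $v_{1}\cdots v_{d}=\sum_{(s_{1},\dots,s_{d})}v_{1}^{(s_{1})}\cdots v_{d}^{(s_{d})}$ is exactly $\sum_{j}w_{j}$, attained by the single term $v_{1}^{(w_{1})}\cdots v_{d}^{(w_{d})}$: any term of smaller total weight must contain a vanishing factor, while the displayed term is a product of nonzero elements of $\Sym^{\bullet}V$ and hence nonzero because $\Sym^{\bullet}V$ is an integral domain. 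Thus $\mu(C,\lambda)=-\sum_{j}w_{j}$. This is the step I expect to be the crux: one must be certain that no cancellation among the weight-graded pieces destroys the bottom term, and the integral-domain property of $\Sym^{\bullet}V$ is precisely what rules this out (note in particular that ties among the $r_{i}$ cause no trouble).

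Finally I would translate $\sum_{j}w_{j}\le 0$ into the stated geometric condition. Let $\rho_{1}>\cdots>\rho_{m}$ be the distinct weights of $\lambda$, set $F_{k}=\bigoplus_{r_{i}\ge\rho_{k}}\Cb e_{i}$ so that $0\subsetneq F_{1}\subsetneq\cdots\subsetneq F_{m}=\Cb^{N+1}$, and let $n_{k}=\#\{\,j:P_{j}\in F_{k}\,\}$ counted with multiplicity. Since $w_{j}=\rho_{k}$ for the least $k$ with $P_{j}\in F_{k}$, Abel summation gives $\sum_{j}w_{j}=\sum_{k=1}^{m}(\rho_{k}-\rho_{k+1})\,n_{k}$ with $\rho_{m+1}:=0$, while $\sum_{i}r_{i}=0$ reads $\sum_{k=1}^{m}(\rho_{k}-\rho_{k+1})\dim F_{k}=0$; eliminating $\rho_{m}$ yields
\[
\sum_{j}w_{j}\;=\;\sum_{k=1}^{m-1}(\rho_{k}-\rho_{k+1})\Bigl(n_{k}-\tfrac{d}{N+1}\dim F_{k}\Bigr),
\]
a combination of the numbers $n_{k}-\tfrac{d}{N+1}\dim F_{k}$ with positive, otherwise arbitrary, coefficients $\rho_{k}-\rho_{k+1}$. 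Hence $\mu(C,\lambda)\ge 0$ for all $\lambda$ if and only if $n_{k}\le\tfrac{d}{N+1}\dim F_{k}$ for every such flag; taking the two-step flag $0\subsetneq W\subsetneq\Cb^{N+1}$ shows this is equivalent to the assertion that at most $dk/(N+1)$ of the points lie in any $k$-dimensional linear subspace of $\Cb^{N+1}$, i.e.\ in any $(k-1)$-plane in $\Pb^{N}$. The stable case is identical: isolating a single nonzero $\rho_{k}-\rho_{k+1}$ shows that $\mu(C,\lambda)>0$ for all nontrivial $\lambda$ is equivalent to every inequality $n_{k}<\tfrac{d}{N+1}\dim F_{k}$ being strict. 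The remaining work — the Abel-summation bookkeeping and the reduction from flags to individual subspaces — is routine.
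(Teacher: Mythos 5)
Your proof is correct: the paper itself gives no argument for this proposition (it is quoted directly from Mumford's \emph{Stability of projective varieties}), and your Hilbert--Mumford computation --- identifying the Chow point of the zero-cycle with $[v_1\cdots v_d]\in\Pb(\Sym^d V)$, isolating the unique bottom weight term $v_1^{(w_1)}\cdots v_d^{(w_d)}$ via the integral-domain property, and converting $\sum_j w_j\le 0$ into the flag inequalities by Abel summation --- is exactly the standard proof from the cited source. No gaps; the reduction to two-step flags and the strict-inequality version for stability are handled correctly.
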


\noindent Putting things together, we get the following theorem for semi-stable zero-cycles. 

\begin{thm} If $C$ is a semistable zero-cycle of degree $d$ on $\mathbb P^N$, then $\hat{h}(C) \geq 0$. \end{thm}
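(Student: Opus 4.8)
The plan is to combine Theorem~\ref{base case} with the sub-additivity from Proposition~\ref{additivity} by decomposing an arbitrary semi-stable zero-cycle $C = \sum_{i=1}^d P_i$ into a sum of zero-cycles, each of which is (a rational multiple of) a configuration of $N+1$ points in general linear position, so that Theorem~\ref{base case} applies to each piece. The key combinatorial input is the semi-stability criterion of Proposition~\ref{H-M}: since no more than $dk/(N+1)$ of the points of $C$ lie in any $k$-dimensional subspace, the same weighted matroid/transversality condition that governs semi-stability is exactly what is needed to guarantee that the multiset of points can be partitioned (after passing to an integer multiple) into ``bases'' — $(N+1)$-element subsets in general position. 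Concretely, I would replace $C$ by $MC$ for a suitable integer $M$ (legitimate since $\hat h$ scales linearly, or can be extended $\Qb_{\ge 0}$-linearly, under scaling the cycle), so that all the fractional weights $d k/(N+1)$ become integers, and then invoke a Hall-type / Rado-type matroid partition theorem to write $MC = \sum_{j} B_j$ where each $B_j$ is a zero-cycle of $N+1$ points in general linear position.

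With such a decomposition in hand, the argument is short: by Proposition~\ref{additivity} applied inductively,
\[
\hat h(MC) \;=\; \hat h\!\left(\sum_j B_j\right) \;\ge\; \sum_j \hat h(B_j),
\]
and by Theorem~\ref{base case} each $\hat h(B_j) = 0$, so $\hat h(MC) \ge 0$, whence $\hat h(C) \ge 0$ after dividing by $M$. One subtlety to address is that the pieces $B_j$ must themselves be semi-stable for Proposition~\ref{additivity} to apply — but $N+1$ points in general linear position form a stable zero-cycle (the determinant is nonzero, so the criterion of Proposition~\ref{H-M} holds with strict inequalities), so this is automatic. I would also need to confirm that $\hat h$ behaves correctly under replacing $C$ by $MC$: the Chow point of $MC$ is the $M$-th power (in the appropriate symmetric-power multiplication) of that of $C$, so $\hat h(MC) = M\,\hat h(C)$ follows from the same multiplicativity observations used in the proof of Proposition~\ref{additivity}.

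The main obstacle is the combinatorial partition step: proving that a semi-stable multiset of points in $\Pb^N$ admits, after scaling, a decomposition into $(N+1)$-element subsets each in general linear position. This is where the Hilbert--Mumford numerical condition of Proposition~\ref{H-M} must be converted into an actual covering. The natural tool is the matroid union / base-packing theorem (Nash-Williams–Tutte type, or its fractional/weighted form due to Edmonds): the condition ``at most $dk/(N+1)$ points in each $k$-plane'' is precisely the rank inequality ensuring that the fractional matroid of the point configuration decomposes into $d/(N+1)$ bases, and clearing denominators gives the integral statement for $MC$. Handling the degenerate cases — repeated points, points lying on lower-dimensional subspaces but not enough to violate semi-stability — requires care in setting up the right (poly)matroid, but no new ideas beyond the standard packing theorems. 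Once the decomposition is established, the rest is a two-line appeal to the two preceding results.
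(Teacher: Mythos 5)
Your proposal is correct and follows the paper's overall strategy exactly: decompose the semistable cycle into a nonnegative rational combination of configurations of $N+1$ linearly independent points, then apply Proposition~\ref{additivity} and Theorem~\ref{base case}. The difference lies entirely in how the decomposition lemma is established. The paper gives a self-contained induction on the number of distinct vectors: either some $k$-dimensional subspace contains exactly $kd/(N+1)$ vectors, in which case the configuration splits into the part inside that subspace and the projection of the complement to the quotient (both semistable with fewer distinct vectors), or no such ``tight'' subspace exists, in which case one subtracts the largest admissible rational multiple of a basis until either a constraint becomes tight or a multiplicity vanishes. You instead clear denominators and invoke the matroid base-packing theorem of Edmonds/Nash-Williams--Tutte, observing that the Hilbert--Mumford condition of Proposition~\ref{H-M} (at most $dk/(N+1)$ vectors in any $k$-dimensional subspace) is exactly the packing condition $t\,(r(E)-r(X)) \le |E\setminus X|$ with $t = d/(N+1)$, once one reduces to checking flats. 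This is a legitimate shortcut: it buys a shorter proof at the cost of importing a nontrivial combinatorial theorem, and it does require the verification you only sketch --- that the packing inequality for arbitrary subsets reduces to the inequality for subspaces (standard, since closing up $X$ to a flat preserves rank and only shrinks the complement) and that $|E| = t\cdot r(E)$ so the disjoint bases exhaust the multiset. Your handling of the scaling ($\hat h(MC) = M\hat h(C)$ via multiplicativity of Chow forms) and of the semistability of each basis piece is also consistent with what the paper does. In short: same skeleton, with the paper's hands-on induction replaced by a citation to matroid theory; both are valid.
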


\begin{proof}  We note that for any $C' = \sum_{i=1}^n a_i C_i'$ where $a_i \in \Qb$, clearing denominators and repeatedly using Proposition \ref{additivity} yields $$\hat{h}(C') \geq \sum_{i=1}^n a_i \hat{h}(C_i').$$ Thus it will be sufficient to write $C$ as a nonnegative rational linear combination of the algebraic cycle consisting of $N+1$ distinct points in general linear position, which is a semistable cycle on $\mathbb P^N$, by Proposition \ref{H-M}. \vspace{0.1in}

\noindent We view $C$ as a \textit{set-with-multiplicity} $S$ of $d$ vectors in $\mathbb C^n$. Since we are working with rational linear combinations, it will be convenient to let the multiplicities be nonnegative rational numbers. Whenever we count vectors, unless we specify otherwise, we will count them with multiplicity. A set of vectors with nonnegative rational multiplicity is said to be a \textit{configuration of vectors} and a configuration of $d$ vectors in $\mathbb C^{m}$ such that no more than $ dk/m$ vectors lise in any $k$-dimensional subspace is said to be \textit{semistable configuration}. A configuration that is a nonnegative rational combination of some number of bases of $\mathbb C^{N+1}$ is said to be a \textit{decomposable configuration}.
\vspace{0.1in}

\noindent Let $S$ be a semistable configuration in $\mathbb C^m$. We check that every semistable configuration is decomposable by inducting on the number $\ell$ of distinct vectors. If $\ell = 0$, there is nothing to prove. Otherwise, we split into two cases, depending on whether there is a nontrivial proper subspace of dimension $k$ that contains exactly $kd/m$ vectors. 
\vspace{0.1in}

\noindent Suppose there is a nontrivial proper subspace $V \subset \mathbb C^m$ of dimension $k$ that contains exactly $kd/m$ vectors of $S$. The configuration of vectors in $S$ contained in $V$ (with the same multiplicities) is a semistable configuration in $\mathbb C^m$ by Proposition \ref{H-M}, and contains fewer distinct vectors than $S$ and is therefore, by induction, decomposable. The configuration of vectors in $S$ not contained in $V$ (with the same multiplicities), projected to $\mathbb C^{m}/V$ is  semistable by Proposition \ref{H-M}, and also contains fewer distinct vectors and is hence decomposable. The union of any basis of $S \cap V$ with any subset of $S \text{ } \backslash \text{ }V$ that projects to a basis of $\mathbb C^{m}/V$ is a basis of $\mathbb C^{m}$. So we can combine the two decompositions to obtain a decomposition of $S$. 
\vspace{0.1in}

\noindent Suppose now that no nontrivial proper subspace of dimension $k$ contains exactly $kd/m$ vectors of $S$. Choose any subset of $S$ that is a basis. This is possible unless all of $S$ is contained in a single proper subspace, which would violate semistability. Subtract a multiple of this basis from $S$ - the largest multiple such that none of the semistability conditions is violated and the multiplicity of each of the vectors remains nonnegative. Since all the inequalities are linear with rational coefficients, this largest multiple must be a rational number. At this point, we still have a semistable configuration of vectors, but either one subspace of dimension $d$ now contains exactly  $kd/m$ vectors, and we are in the first case, or the multiplicity of one of the vectors becomes zero, which means the number of distinct vectors is reduced by one, so the new configuration is decomposable by induction, and we can add the basis back in to get a decomposition of $S$.

\end{proof}

\noindent Finally, we show that for stable cycles, we have positivity and not merely nonnegativity.

\begin{thm} If $C$ is a stable zero-cycle of degree $d$ on $\mathbb P^N$, then $\hat{h}(C) > 0$. \end{thm}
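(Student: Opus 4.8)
The plan is to upgrade the nonnegativity argument for semistable zero-cycles to strict positivity by exploiting the sharpness clauses already built into Proposition~\ref{additivity} and Theorem~\ref{base case}. Suppose for contradiction that $C$ is stable but $\hat h(C) = 0$. Running the decomposition argument from the previous theorem, write $C$ (after clearing denominators) as a nonnegative rational combination $C = \sum_i a_i B_i$ of bases $B_i$ of $\mathbb C^{N+1}$; each $B_i$, viewed as a zero-cycle, is a configuration of $N+1$ distinct points in general linear position, so $\hat h(B_i) = 0$ by Theorem~\ref{base case}. Repeated application of the sharpness part of Proposition~\ref{additivity} then forces, at each Archimedean place $v$, any $g \in \SL_{N+1}(\mathbb C)$ minimizing the local contribution of $C$ to simultaneously minimize the local contribution of every $B_i$. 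By the last sentence of Theorem~\ref{base case}, such a $g$ must send the columns of each $B_i$ to a set of $N+1$ pairwise orthogonal vectors.

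The heart of the argument is then a linear-algebra contradiction: no single $g \in \SL_{N+1}(\mathbb C)$ can orthogonalize every basis appearing in the decomposition of a stable $C$, unless $C$ has a very degenerate form incompatible with stability. First I would observe that a minimizing $g$ exists at each Archimedean place — either by noting that the relevant $\inf$ is attained (the function $A \mapsto \sum_i \log(\sum_j |a_{ij}|^2)$ on $\SL_{N+1}(\mathbb C)$ is proper modulo the compact group $\mathrm{SU}(N+1)$ once one restricts to a stable configuration, using Proposition~\ref{H-M} to rule out escape to the boundary), or, if attainment is delicate, by working with a minimizing sequence and extracting the orthogonality condition in the limit. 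Fix such a $g$ and apply it; we may assume $g = I$. Now each $B_i$ has pairwise orthogonal columns. I would then argue that a stable configuration must involve at least two bases $B_i$, $B_j$ whose span structure overlaps nontrivially: concretely, pick any vector $u$ in the support of $C$; stability (strict inequalities in Proposition~\ref{H-M}) guarantees that the remaining vectors do not all lie in a hyperplane complementary to $u$, so some basis $B_j$ not containing $u$ must contain a vector $w$ not orthogonal to $u$, while a basis $B_i$ containing $u$ forces $u$ orthogonal to the other members of $B_i$. Tracking these orthogonality relations across the decomposition, one finds a vector constrained to be orthogonal to itself, or a basis forced to be degenerate — contradicting that the $B_i$ are genuine bases.

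The main obstacle I anticipate is precisely this last combinatorial step: translating ``every basis in the decomposition is simultaneously orthogonalized by $g$'' into a clean contradiction with stability. The decomposition produced in the proof of the previous theorem is not canonical, and different decompositions could in principle be orthogonalizable by different $g$'s, so the argument must either (a) fix one decomposition and show the orthogonality constraints are already overdetermined, or (b) argue more intrinsically that the full configuration $C$ being stable means the columns cannot be arranged so that every basis in \emph{any} such decomposition is orthogonal. I would pursue (a): with a fixed decomposition $C = \sum a_i B_i$ and $g = I$, consider the Gram-type data $\langle u, w\rangle$ for $u, w$ ranging over all vectors appearing; orthogonality within each $B_i$ says this Gram matrix, restricted to any $B_i$, is diagonal. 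If the union of the supports spans $\mathbb C^{N+1}$ (which it does, by stability) and the incidence pattern of the $B_i$ is connected in the appropriate sense, one deduces the Gram matrix is diagonal on \emph{all} of the support, i.e.\ all vectors in the support of $C$ are pairwise orthogonal — but then $C$ is supported on at most $N+1$ distinct points, hence (being stable of degree $d$) is $\frac{d}{N+1}$ times a single orthogonal basis, and a direct computation as in Theorem~\ref{base case} shows that at a single Archimedean place the infimum $0$ is attained, while the \emph{stability} strict inequalities are violated by such a configuration unless $d = N+1$; and when $d = N+1$ a stable cycle of $N+1$ distinct points still has $\hat h = 0$ but is \emph{not} stable because its stabilizer in $\PGL$ is infinite (the diagonal torus), contradicting stability. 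Closing this dichotomy — connectedness of the incidence pattern versus a reduction step when it is disconnected — is the part that will require the most care, but in the disconnected case one simply splits $\mathbb C^{N+1}$ into the corresponding direct sum and restarts the induction on a lower-dimensional ambient space.
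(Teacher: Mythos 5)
Your overall strategy---combining the sharpness clause of Proposition \ref{additivity} with the orthogonality characterization of Archimedean minimizers in Theorem \ref{base case}---is the right one, but the combinatorial core of your argument contains a genuine gap: the claim that the bases appearing in the decomposition of a stable configuration cannot all be simultaneously orthogonalized by a single $g$ is simply false. Take $N=1$ and $C = [1:0]+[0:1]+[1:1]+[1:-1]$ in $\mathbb P^1$. Each point has multiplicity $1 < 4/2$, so $C$ is stable, and it decomposes as $B_1+B_2$ with $B_1 = \{e_1,e_2\}$ and $B_2=\{(1,1),(1,-1)\}$. Both bases are already orthogonal, $g=I$ minimizes the Archimedean local contribution of $C$ (the two Hadamard lower bounds for $B_1$ and $B_2$ are attained simultaneously at $I$), and yet $C$ is stable with $\hat h(C)>0$. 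Your ``disconnected incidence pattern'' fallback does not rescue this: the two bases share no vectors but both span all of $\mathbb C^2$, so there is no direct-sum splitting to induct on. Worse, in this example the strictness of the height inequality for the decomposition $B_1+B_2$ comes from the place $p=2$ (where $B_2$ reduces to a doubled point and is no longer residually semistable), not from the Archimedean place, so no amount of orthogonality bookkeeping on that fixed decomposition will produce your contradiction. Relatedly, your intermediate step ``some basis $B_j$ not containing $u$ must contain a vector $w$ not orthogonal to $u$'' yields nothing, since the orthogonality constraints you have are only within each basis, never across two different bases.

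The paper sidesteps all of this by abandoning the decomposition into bases and instead building a tailored two-piece decomposition around the failure of orthogonality. Using the \emph{strict} inequalities of Proposition \ref{H-M}, one finds $N+1$ vectors in $C$ that are linearly independent but, after applying the Archimedean minimizer $A$, not pairwise orthogonal (pick $v_1$; stability forbids every other vector from lying in $\mathrm{span}(v_1)\cup v_1^\perp$, so a non-orthogonal independent $v_2$ exists; extend to a basis). One then sets $C' = (P_1+\dots+P_{N+1})/(N+1)^2$; because the stability inequalities are strict by at least $1/(N+1)$, the remainder $C-C'$ is still semistable, and Theorem \ref{base case} says $A$ fails to minimize $C'$'s local contribution, so Proposition \ref{additivity} gives $\hat h(C) > \hat h(C')+\hat h(C-C')\ge 0$. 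If you want to stay closer to your plan, the viable variant is your option (b): show that \emph{some} decomposition contains a non-orthogonal basis and use that one---which is essentially what the paper does. As stated, option (a) cannot be closed.
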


\begin{proof} Choose an element $A$ of $\SL_{N+1}(\mathbb C)$ that minimizes the local contribution to the height of $C$ at $v \in M_{K,\infty}$. We will write $C$ as a rational linear combination of two semistable zero-cycles. By Proposition \ref{additivity}, this implies that $\hat{h}(C) \geq 0$. If we can show that $A$ does not minimize the local contribution of one of these two semistable zero-cycles, then Proposition \ref{additivity} tells us that the inequality is not sharp and hence $\hat{h}(C)>0$. We do this by choosing one of our zero-cycles $C'$ to correspond to a configuration of $N+1$ vectors that is independent, but (after applying $A$) is not pairwise orthogonal. Thus $C'$ is semi-stable but by Theorem \ref{base case} its local contribution is not minimized by $A$.
\vspace{0.1in}

\noindent Assume that we have $N+1$ distinct points $\{P_1, \ldots, P_{N+1}\}$ corresponding to vectors in $C$ that are independent but not pairwise orthogonal. Then we claim that $$C - \frac{P_1 + \dots + P_{N+1}} {(N+1)^2}$$
is semistable. We can see this by checking the relevant inequalities from Proposition \ref{H-M}. Since both sides of these inequalities are rational numbers of denominator dividing $N+1$, they are strict by at least $\displaystyle \frac{1}{N+1}$, and subtracting a cycle of degree at most $\displaystyle \frac{1}{N+1}$ preserves semistability. Thus we will have succesfully decomposed $C$ into two cycles, one of which  does not have its height minimized.
\vspace{0.1in}

\noindent So it remains to show that given a stable configuration of $d$ vectors in $\mathbb C^{N+1}$, we may find $N+1$ that are linearly independent but not pairwise orthogonal. To do this, pick any vector $v_1$ in $C$. Then pick any vector $v_2$ that is not a multiple of $v_1$ but is also not orthogonal to $v_2$. This is possible because if every vector in $C$ is either a linear multiple of $v_1$ or orthogonal to $v_1$, then those two subspaces of dimensions $1$ and $N$ would contain all vectors, but by stability one contains strictly less than $1/(N+1)$ of the vectors and the other contains strictly less than $N/(N+1)$ of the vectors. Finally, inductively pick $N-1$ additional vectors that are each independent of the previous ones. This is possible because otherwise all the vectors would lie in a proper subspace, which is impossible by semistability.

 \end{proof}

\section{Positivity for hyperplane arrangements}\label{hyp}

\noindent In this section, we prove the positivity of the GIT height function for semi-stable hyperplane arrangements. Recall that a \textit{hyperplane arrangement} is an algebraic cycle comprised of a finite set-with-multiplicity of hyperplanes in $\Pb(V)$. Since $V \cong (V^\vee)^\vee$, a vector in $V$ corresponds to a linear functional on $V^\vee$ whose zero-set defines a hyperplane in $V^\vee$. Projectivizing, we see that a point in $\Pb(V)$ corresponds to a hyperplane in $\Pb(V^\vee)$ and thus we have a map from the Chow variety of zero-cycles in $\mathbb P(V)$ to the Chow variety of $(N-1)$-cycles in $\mathbb P(V^{\vee})$ that sends an arrangement of points to the dual hyperplane arrangement. Call this map $\varphi$.

\vspace{0.1in}

\noindent We will fix a generator of $\bigwedge^{N+1} V$ and hence an isomorphism $\bigwedge^{N} V \cong V^{\vee}$ and $\bigwedge^{N} V^{\vee} \cong V$. This will not affect the invariance of any construction under $SL_{N+1}$.

\begin{lem}\label{chow formula} We may view the Chow coordinates of an $n$-cycle of degree $d$ in $\mathbb P(V)$ as a degree $d$ multilinear function on $n$-tuples of elements of $V^{\vee}$. With this point of view, the Chow coordinates of $\varphi(X)$ are just the Chow coordinates of $X$ composed with the map $x_1 \wedge \dots \wedge x_N$ from $V^n$ to $V^{\vee}$. \end{lem}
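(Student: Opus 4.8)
The plan is to reduce everything to the duality between a single point and a single hyperplane, using that both the Chow form and the map $\varphi$ are multiplicative under addition of cycles, and there to unwind the definition of the Chow form by means of the elementary fact that the exterior product of $N$ linear forms represents their common zero.

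First I would collect the ingredients. From the description of Chow coordinates of zero-cycles recalled in Section \ref{pts} (see also \cite{DS}), the Chow form of a zero-cycle $X=\sum_i[v_i]$ (with multiplicity) in $\mathbb P(V)$ is, up to a nonzero scalar, the degree-$d$ polynomial $\xi\mapsto\prod_i\langle v_i,\xi\rangle$ on $V^\vee$; and the Chow coordinates of an $(N-1)$-cycle (hyperplane arrangement) of degree $d$ form a multihomogeneous function of multidegree $(d,\dots,d)$ in $N$ linear forms, characterized up to scalar by vanishing exactly on the incidence locus. Since addition of zero-cycles multiplies Chow coordinates — this is exactly the multiplicativity used in the proof of Proposition \ref{additivity} — and $\varphi$ is additive by construction, it suffices to prove the asserted identity when $X=[v]$ is a single point. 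I would also record the linear-algebra fact that, for linearly independent $x_1,\dots,x_N\in V$, the element $x_1\wedge\dots\wedge x_N\in\bigwedge^N V\cong V^\vee$ (via the fixed generator of $\bigwedge^{N+1}V$) spans $\bigcap_j\ker x_j$ and hence is a homogeneous coordinate vector for the common zero in $\mathbb P(V^\vee)$ of $x_1,\dots,x_N$ viewed as linear forms on $\mathbb P(V^\vee)$; this is immediate from $x_j\wedge x_1\wedge\dots\wedge x_N=0$.

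Now take $X=[v]$. By definition of $\varphi$, we have $\varphi(X)=H_v:=\{[\xi]\in\mathbb P(V^\vee):\langle v,\xi\rangle=0\}$, the hyperplane cut out by the linear form $v\in V=(V^\vee)^\vee$. By the characterization above, the Chow form of $H_v$, evaluated at $(x_1,\dots,x_N)\in V^N$, vanishes exactly when the common zero $[x_1\wedge\dots\wedge x_N]$ of $x_1,\dots,x_N$ lies on $H_v$, i.e. exactly when $\langle v,x_1\wedge\dots\wedge x_N\rangle=0$. The function $(x_1,\dots,x_N)\mapsto\langle v,x_1\wedge\dots\wedge x_N\rangle$ is a nonzero form of multidegree $(1,\dots,1)$ — after a choice of coordinates it is a maximal minor, in particular irreducible — with precisely this vanishing locus, so it equals the Chow form of $H_v$ up to a nonzero scalar. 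But this is exactly the Chow form $\xi\mapsto\langle v,\xi\rangle$ of $X=[v]$ composed with the map $(x_1,\dots,x_N)\mapsto x_1\wedge\dots\wedge x_N$. Multiplying these identities over the points of a general zero-cycle $X=\sum_i[v_i]$ shows that the Chow form of $\varphi(X)$ at $(x_1,\dots,x_N)$ is $\prod_i\langle v_i,x_1\wedge\dots\wedge x_N\rangle$, which is the Chow form $R_X$ of $X$ evaluated at $x_1\wedge\dots\wedge x_N\in\bigwedge^N V\cong V^\vee$; this is the claim.

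The step I expect to demand the most care is the bookkeeping of the identifications $V\leftrightarrow V^\vee$ and $\bigwedge^N V\cong V^\vee$, together with fixing in what sense the identity holds: a Chow form is intrinsically defined only up to scalar, and a short computation in a standard basis shows that, even after fixing the generator of $\bigwedge^{N+1}V$, the two sides can differ by a sign depending on $N$. I would therefore phrase and apply the lemma projectively, which is all that is needed for the incidence and $\SL_{N+1}$-invariance statements that follow, and defer the precise scalar normalization to the separate comparison of metrics. Nothing beyond the classical theory of Chow forms and elementary multilinear algebra is required.
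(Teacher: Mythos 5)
Your proof is correct and follows essentially the same route as the paper's: both reduce to a single point/hyperplane via multiplicativity of Chow coordinates under addition of cycles, identify the Chow form of the hyperplane dual to $v$ with $(x_1,\dots,x_N)\mapsto v\wedge x_1\wedge\dots\wedge x_N$, and observe that the product of these over the $v_i$ factors through the wedge map. The only difference is that the paper takes that identification as the definition of the Chow coordinate of a hyperplane, whereas you derive it from the incidence/vanishing-locus characterization and note the projective scalar ambiguity --- harmless extra care, not a different argument.
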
 

\begin{proof}The Chow coordinate of a hyperplane $\varphi(v)$ in $V^{\vee}$ that corresponds to an element $v \in V$ is simply a function $$F_v : V^{\otimes N} \to \bigwedge^{N+1}V \cong k$$ such that $$F_v(x_1\otimes \ldots \otimes x_N) = v \wedge x_1 \ldots \wedge x_N.$$ 

\noindent From this definition we can see that the Chow coordinates of a hyperplane arrangement is a product of functions of this type, which we can write as $$x_1 \otimes \ldots \otimes x_N \mapsto  \prod_{i=1}^d (v_i \wedge x_1 \dots \wedge x_N)$$ which is the composition of the function $$x \mapsto \prod_{i=1}^d v_i \wedge x$$ with the function $$x_1 \otimes \ldots \otimes x_N \mapsto x_1 \wedge \dots \wedge x_N.$$ The Chow coordinate of the arrangement of points is just $x \displaystyle \mapsto \prod_{i=1}^d v_i \wedge x$.    Hence the hyperplane is indeed such a composition. \end{proof}

\noindent The following proposition summarizes some elementary properties of $\varphi$.

\begin{prop}\label{props} $\varphi$ is $\SL(V)$-equivariant and $$\varphi^*\Oc_{\Pb[(\Sym^d(V^\vee))^{\otimes N}]}(1) = \Oc_{\Pb(\Sym^d(V)}(1).$$ Furthermore $\varphi$ preserves semi-stability. \end{prop}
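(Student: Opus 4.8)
The plan is to verify the three assertions of Proposition~\ref{props} essentially directly from Lemma~\ref{chow formula}, which expresses the Chow coordinates of $\varphi(X)$ as the Chow coordinates of $X$ precomposed with the $\SL(V)$-equivariant map $\wedge\colon (x_1,\dots,x_N)\mapsto x_1\wedge\cdots\wedge x_N$ from $V^N$ to $\bigwedge^N V\cong V^\vee$. For equivariance, I would note that $\SL(V)$ acts on $\bigwedge^{N+1}V$ trivially (it preserves our fixed generator), so the map $\wedge$ intertwines the natural $\SL(V)$-actions on $V^N$ and on $V^\vee\cong\bigwedge^N V$; composing the Chow coordinates of $X$ with this equivariant map and using that the Chow coordinates of $X$ themselves transform equivariantly gives that the Chow coordinates of $\varphi(g\cdot X)$ equal $g$ applied to those of $\varphi(X)$. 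Hence $\varphi(g\cdot X)=g\cdot\varphi(X)$.

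For the statement about line bundles, I would argue that $\varphi$ is induced by a linear map. Concretely, the Chow coordinates of $X$ form an element of $(\Sym^d V)^{\otimes(n+1)}$, here $(\Sym^d V^\vee)^{\otimes 1}$ since a zero-cycle has $n=0$; precomposition with the fixed multilinear map $\wedge\colon V^N\to V^\vee$ dualizes to a linear map $\Sym^d(V^\vee)^{\vee}\to(\Sym^d(V^\vee)^{\otimes N})^{\vee}$, or more usefully gives a linear embedding $\Sym^d V\hookrightarrow (\Sym^d V^\vee)^{\otimes N}$ on the spaces of Chow coordinates (identifying $\Sym^d V$ with degree-$d$ functions on $V^\vee$). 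A morphism of projective spaces $\Pb(W_1)\to\Pb(W_2)$ arising from a linear injection $W_1\hookrightarrow W_2$ pulls back $\Oc(1)$ to $\Oc(1)$; applying this with $W_1=\Sym^d V$ and $W_2=(\Sym^d V^\vee)^{\otimes N}$ yields the claimed identity $\varphi^*\Oc(1)=\Oc(1)$. I would spell out the identification of the target $\Pb[(\Sym^d(V^\vee))^{\otimes N}]$ with the Chow variety of $(N-1)$-cycles of degree $d$ in $\Pb(V^\vee)$, matching the notation $Y=\Pb((\Sym^d V)^{\otimes(N+1)})$ with $n=N-1$.

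For preservation of semi-stability, the cleanest route is via the Hilbert–Mumford numerical criterion combined with the explicit combinatorial characterizations already available: by Proposition~\ref{H-M}, a zero-cycle $X=\sum_i [v_i]$ in $\Pb(V)$ is semi-stable iff no more than $dk/(N+1)$ of the $v_i$ lie in any $k$-dimensional subspace of $V$, and the analogous Mumford criterion for hyperplane arrangements says the dual arrangement $\varphi(X)$ (whose hyperplanes are the kernels of the functionals $v_i$) is semi-stable iff no more than $dk/(N+1)$ of those hyperplanes contain any fixed subspace of $\Pb(V^\vee)$ of dimension $N-k$. Since the number of hyperplanes $\{w : \langle v_i,w\rangle=0\}$ containing a given subspace $U\subseteq V^\vee$ equals the number of $v_i$ lying in $U^\perp\subseteq V$, and $\dim U^\perp=N+1-\dim U$, these two conditions match term by term; thus $X$ is semi-stable iff $\varphi(X)$ is. Alternatively, and more in the spirit of this paper's $\SL$-invariant-sections framework, one can deduce it formally from the first two parts: a point $y$ of a $\Pb(W)$ is semi-stable iff some $\SL$-invariant section of some $\Oc(m)$ is nonzero at $y$; since $\varphi$ is equivariant and $\varphi^*\Oc(1)=\Oc(1)$, pulling back invariant sections shows semi-stability of $\varphi(X)$ implies that of $X$, and for the converse one notes that the image of $\varphi$ is cut out $\SL$-equivariantly (it is a linear subvariety) so invariant sections on the source extend, or one simply invokes the numerical criterion as above.

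I expect the main obstacle to be the ``converse'' direction of semi-stability preservation in the invariant-theoretic argument --- pulling back invariant sections only immediately gives one implication, and getting the other cleanly requires either the explicit Hilbert--Mumford computation (comparing the weight of a one-parameter subgroup acting on a zero-cycle with its weight acting on the dual hyperplane arrangement, where one must track the $\bigwedge^N$ carefully and check the weight inequalities transform correctly) or an argument that the linear subvariety $\varphi(\mathrm{Chow}_0)\subset\Pb[(\Sym^d V^\vee)^{\otimes N}]$ meets the semi-stable locus in exactly the image of the semi-stable zero-cycles. The numerical-criterion route via Proposition~\ref{H-M} is the one I would actually write out, since the required orthogonal-complement dimension count is short and self-contained, whereas the abstract GIT route hides a nontrivial point about restricting stability to a subvariety.
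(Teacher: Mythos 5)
Your proposal is correct, and for the first two claims it matches the paper: the paper also dismisses equivariance as clear and derives $\varphi^*\Oc(1)=\Oc(1)$ from the fact that, by Lemma~\ref{chow formula}, $\varphi$ is induced by the linear map ``precompose with the wedge map'' on Chow coordinates. Where you diverge is the semi-stability claim. The paper disposes of it in one sentence --- ``this fact, plus the equivariance, implies that $\varphi$ preserves semi-stability'' --- i.e.\ it invokes the standard GIT fact that a closed $\SL(V)$-equivariant linear embedding with $\varphi^*\Oc(1)=\Oc(1)$ identifies the semi-stable loci. You correctly flag that the pullback of invariant sections only immediately gives the direction ``$\varphi(X)$ semi-stable $\Rightarrow X$ semi-stable,'' which is the opposite of what is needed, and that the needed direction requires extending invariant sections from the image; since $\SL(V)$ is reductive this is handled by the Reynolds operator (the equivariant injection $\Sym^d V\hookrightarrow(\Sym^d V^\vee)^{\otimes N}$ splits, so one composes with the equivariant projection), a point the paper silently assumes. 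Your preferred alternative --- matching Proposition~\ref{H-M} for zero-cycles against the dual Mumford criterion for hyperplane arrangements via the dimension count $\dim U^\perp=N+1-\dim U$ --- is also valid and more self-contained at the level of the combinatorics, but it imports the numerical criterion for hyperplane arrangements as an external input that the paper never states; the abstract route avoids that at the cost of the reductivity argument. Either way the proposition stands; just make sure whichever route you write out actually supplies the missing ingredient (the section-extension step, or the hyperplane-arrangement criterion) rather than citing it implicitly as the paper does.
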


\begin{proof} It's clear that $\varphi$ is $\SL(V)$-equivariant. To compute the pull back of $\Oc(1)$ along $\varphi$, we use Lemma \ref{chow formula} to describe $\varphi$ in Chow coordinates. Because composition with a function is linear, the Chow coordinate of the hyperplane arrangement can be computed from the Chow coordinate of the zero-cycle by a linear map $\Pb(\Sym^d(V)) \to \Pb[(\Sym^d(V^\vee))^{\otimes N}]$. Because the map is linear, we see that $$\varphi^*\Oc_{\Pb[(\Sym^d(V^\vee))^{\otimes N}]}(1) = \Oc_{\Pb(\Sym^d(V))}(1).$$ This fact, plus the equivariance, implies that $\varphi$ preserves semi-stability. 
\end{proof}

\noindent The following proves the strict positivity for semistable hyperplane arrangements.

\begin{thm} If $X$ is a semi-stable hyerplane arrangement on $\mathbb P^N$ and $N > 1$, $$\hat{h}(\varphi(X)) > \hat{h}(X).$$ \end{thm}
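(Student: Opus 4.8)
The plan is to compare the two height functions place by place, using the map $\varphi$ and the explicit description of the Chow metric for zero-cycles and hyperplane arrangements. The key point is that $\varphi$ is $\SL(V)$-equivariant and, by Proposition \ref{props}, pulls back $\Oc(1)$ to $\Oc(1)$; so for any section $s$ not vanishing at $\varphi(X)$ we may take $\varphi^*s$ as the section computing the height of $X$, and then at each place $v$ the infimum over $g \in \SL_{N+1}(\overline{K_v})$ is taken over the \emph{same} group acting compatibly on both sides. Thus it suffices to show that at every place the local contribution of $\varphi(X)$ is at least the local contribution of $X$, with strict inequality at the Archimedean places (there is at least one). For the non-Archimedean places, the norm $\|s\|_v$ is defined using $\max_\alpha |x_\alpha|$, and since the Chow coordinates of $\varphi(X)$ are obtained from those of $X$ by composing with the fixed linear map $x_1 \otimes \dots \otimes x_N \mapsto x_1 \wedge \dots \wedge x_N$ (Lemma \ref{chow formula}), one checks that $\max_\alpha$ over the target coordinates is bounded by $\max_\alpha$ over the source coordinates times a factor depending only on $N, d$ that is a $v$-adic unit (the structure constants of the wedge map are $\pm 1$); hence the non-Archimedean contributions are equal, or at worst the $\varphi$ side is no smaller. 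So the non-Archimedean part contributes $\geq 0$ to the difference.

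The heart of the matter is the Archimedean comparison. Fix $v \in M_{K,\infty}$ and an $A \in \SL_{N+1}(\Cb)$ minimizing the local contribution to $\hat{h}(\varphi(X))$; I want to show this same contribution strictly exceeds the local contribution to $\hat{h}(X)$. Using Proposition \ref{fubinistudy} for the zero-cycle side and the analogous formula for hyperplane arrangements obtained by feeding Lemma \ref{chow formula} into the definition of the Chow metric, the two local contributions become explicit integrals/sums over the columns $v_1, \dots, v_d$ (after applying $A$). Concretely, the zero-cycle contribution is $\frac{1}{d}\sum_i \log \|A v_i\|$ (up to the normalizing constant), while the hyperplane contribution involves $\frac{1}{d}\sum_i \log$ of the norm of $Av_i$ as a functional on $\bigwedge^{N} \Cb^{N+1} \cong (\Cb^{N+1})^\vee$, together with the extra integral term $-\int \log|\sum z_\alpha x_\alpha| dx$ and its associated constant $-\frac12 d N \sum_{j=1}^{N} \frac1j$ against the corresponding zero-cycle constant $-\frac12 d \sum_{j=1}^{N}\frac1j$. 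The plan is to show that, term by term, the hyperplane integrand dominates the zero-cycle integrand by comparing $\int_{S(\Cb^{N+1})^{N}} \log|v \wedge x_1 \wedge \dots \wedge x_N|\, dx$ with $\int_{S(\Cb^{N+1})}\log|v \wedge x|\,dx$ (rescaled): both are rotation-invariant functions of $v$, hence constants times $\log\|v\|$, and the difference of constants is exactly accounted for by the discrepancy between the two Chow-metric normalizing constants, which I expect to cancel so that the hyperplane contribution equals the zero-cycle contribution \emph{plus a nonnegative quantity built from the wedge volumes}. Strictness then comes from the fact (Theorem \ref{base case}) that the zero-cycle contribution is minimized only when the columns are pairwise orthogonal, combined with the observation that for a semi-stable \emph{hyperplane} arrangement with $N>1$ the minimizing $A$ for $\varphi(X)$ cannot simultaneously put all relevant subconfigurations into orthogonal position — so the inequality from the term-by-term comparison is strict somewhere.

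The main obstacle I anticipate is the careful bookkeeping of the two normalizing constants in the Chow metric and of the rotation-invariant integrals, i.e. verifying that the constant $c_N$ with $\int_{S(\Cb^{N+1})^{N}}\log|v\wedge x_1 \wedge\dots\wedge x_N|\,dx = c_N + \log\|v\| + (\text{positive volume term})$ matches up precisely with the $-\frac12 dN\sum 1/j$ versus $-\frac12 d\sum 1/j$ bookkeeping so that the net effect is a clean inequality $\hat h(\varphi(X)) \geq \hat h(X)$ rather than one off by a spurious constant. A secondary obstacle is making the strictness argument uniform: one must rule out that the minimizer $A$ for the arrangement happens to be orthogonalizing for the zero-cycle too, which requires invoking semi-stability (Proposition \ref{H-M}) to guarantee enough linearly-but-not-orthogonally placed vectors survive, much as in the stable zero-cycle proof above. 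Once these two points are pinned down, summing the strict Archimedean inequality with the $\geq$ at finite places and dividing by $[K:\Qb]$ gives $\hat{h}(\varphi(X)) > \hat{h}(X)$.
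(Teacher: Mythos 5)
Your skeleton matches the paper's: compare heights place by place via compatible sections $s$, $s'$ with $s(X)=s'(\varphi(X))$, use Lemma \ref{chow formula} to write the Chow coordinates of $\varphi(X)$ as those of $X$ composed with the wedge map, and split the Archimedean integral accordingly. But you have misidentified where the strict inequality comes from, and your proposed mechanism fails. At each Archimedean place the two local contributions, as functions of $g\in\SL_{N+1}(\Cb)$, differ by an explicit \emph{constant}
$$C' \;=\; \tfrac{1}{2}(N-1)\sum_{j=1}^{N}\tfrac{1}{j} \;+\; \int_{S(\Cb^{N+1})^{N}} \log\left|v_1\wedge\dots\wedge v_N\right|\,dx,$$
independent of $g$ and of the cycle: the pushforward of the uniform measure under $(v_1,\dots,v_N)\mapsto v_1\wedge\dots\wedge v_N/|v_1\wedge\dots\wedge v_N|$ is again uniform, so the integral term for $\varphi(X)$ equals the one for $X$ plus $d\int\log|v_1\wedge\dots\wedge v_N|\,dx$, and the two normalizing constants $-\tfrac12 dN\sum 1/j$ versus $-\tfrac12 d\sum 1/j$ do \emph{not} cancel against this. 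Consequently the minimizing $g$ is the same for $X$ and $\varphi(X)$, so your proposed source of strictness --- that the minimizer for $\varphi(X)$ cannot orthogonalize the dual point configuration --- is both unusable (two functions differing by a constant have the same minimizers, so no discrepancy between minimizers can arise) and false in examples: for the $N+1$ coordinate hyperplanes the dual points are the standard basis vectors, which are pairwise orthogonal. The entire content of the theorem is the purely numerical fact $C'>0$, which your plan does not address; it requires lower-bounding the wedge-volume integral by writing $\log|v_1\wedge\dots\wedge v_N|$ as a sum of logs of lengths of projections and invoking Stoll's evaluation $\int_{S(\Cb^{N+1})}\tfrac12\log|x_N|^2\,dx=-\tfrac12\sum_{j=1}^{N}1/j$, which gives the strict bound $\int\log|v_1\wedge\dots\wedge v_N|\,dx>-\tfrac12(N-1)\sum_{j=1}^N 1/j$. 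Without this, you get neither strictness nor even the non-strict inequality, since a priori $C'$ could be negative.

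A secondary issue: at the finite places, the bound you state ($\max$ of the target Chow coordinates $\le$ $\max$ of the source coordinates, because the wedge map has unit coefficients) goes the \emph{wrong} way for your purposes. Since the local contribution is $-\log|s(x)|_v+\log\max_\alpha|x_\alpha|_v$, a smaller maximum makes the contribution of $\varphi(X)$ \emph{smaller}, not larger. You need exact equality of the two maxima, which the paper obtains by showing the linear map on Chow coordinates is injective modulo $p$, using that the wedge map $V^N\to\bigwedge^N V$ is surjective modulo $p$.
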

\begin{proof} To compare the GIT heights we need a section $s$ of $\Oc_{\Pb(\Sym^d(V)}(1)$ and a section $s'$ of $\Oc_{\Sym^d(V^\vee))^{\otimes N}}(1)$. We need these sections to be comparable so we can compare the height at each place. To this end, we view $\Sym^d(V^\vee))^{\otimes N}$ as the space of functions of multidegree $d$ on $N$ copies of $V$. Then choose $s'$ to be evaluation of that function on $N$ fixed vectors $v_1, \dots, v_N$ of $V$. 
\vspace{0.1in}

\noindent Then view $\Sym^d(V)$ as functions of degree $d$ on $V^{\vee}$, and let $s$ be evaluation of the function on $v_1 \wedge \dots \wedge v_N \in \wedge^{n-1} V  \cong V^\vee$. Now, because of Lemma \ref{chow formula}, $$s(X) = s'(\varphi(X)).$$

\noindent We are going to  show that for $X$ a zero-cycle of degree $d$ and $v \in M_{K,f}$, we have for $v\in M_{K,f}$ $$ ||s||'_v(\varphi(X))=||s||_v(X)$$ and for $v\in M_{K,\infty}$, we have   $$ ||s||'_v(\varphi(X))=||s||_v(X)-C$$ for an explicit positive constant $C$. It will follow that $$\hat{h}(\varphi(X))=\hat{h}(X)+ C.$$

\noindent First let's verify the non-archimedean case. Since we already saw  $s(X) = s'(\varphi(X))$, it is sufficient to show that the maximum of the $v$-adic valuations of the Chow coordinates of $X$ and $\varphi(X)$ are the same. The Chow coordinates of $\varphi(X)$ come from the Chow coordinates of $X$ by a linear map with integer coefficients, so $\varphi$ cannot increase the maximum $v$-adic valuation of the entries, and it only diminishes them if it sends an integer vector that is not a multiple of $p$ to a vector that is a multiple of $p$, which means it fails to be injective modulo $p$. Since the linear map is composition with a certain fixed function, that is the same as showing that the function being composed with is surjective modulo $p$. But that is the map $V \times \dots \times V \to \wedge^N V$, which is surjective by definition.
\vspace{0.1in}

\noindent Next let's verify the infinite case. The key term is the integral, which we will handle first. Recall again that the Chow coordinates of $\varphi(X)$ are defined by composition with the wedge map, so the integral $$I = \int_{S(\mathbb C^{N+1})^{N+1}} \log | \sum_\alpha z_\alpha x_\alpha | dx= \int_{S(\mathbb C^{N+1})^{N}} \log | f(v_1 \wedge \dots \wedge v_N) | dx.$$ For $v_1, \dots, v_N$ on the unit sphere, we can write $v_1 \wedge \dots \wedge v_N$ as a point $a(v_1,\dots, v_N)$ on the unit sphere times a nonnegative radius $|v_1 \wedge \dots \wedge v_N|$. Now using the fact that $f$ is homogeneous of degree $d$, we compute

\begin{align*}
 I &=\displaystyle \int_{S(\mathbb C^{N+1})^{N}} \log \left| f(a (v_1,\dots, v_N) |v_1 \wedge \dots \wedge v_N|) \right| dx \\ &=\int_{S(\mathbb C^{N+1})^{N}} \log \left| f(a (v_1,\dots, v_N) )|v_1 \wedge \dots \wedge v_N|^d \right| dx \\ &=\int_{S(\mathbb C^{N+1})^{N}} \left(\log | f(a (v_1,\dots, v_N) ) | +d \log |v_1 \wedge \dots \wedge v_N|\right)dx \\ &=\int_{S(\mathbb C^{N+1})^{N}} \log | f(a (v_1,\dots, v_N) ) | dx + d\int_{S(\mathbb C^{N+1})^{N}} \log |v_1 \wedge \dots \wedge v_N|dx \end{align*}

\noindent Now for $v_1, \dots, v_N$ evenly distributed on the unit sphere, $a(v_1,\dots,v_n)$ is also evenly distributed on the unit sphere. Hence the pushforward of the uniform measure on $S(\mathbb C^{N+1})^{N+1} $ to $S(\mathbb C^{N+1})$ along the map $a$ is the uniform measure on $S(\mathbb C^{N+1})$ and so $$ \int_{S(\mathbb C^{N+1})^{N+1}} \log | f(a (v_1,\dots, v_N) ) | dx = \int_{S(\mathbb C^{N+1})} \log |f(x)| dx,$$ which is exactly the integral term in the formula for the Chow metric of $X$.
\vspace{0.1in}

\noindent By matching terms we get, because $n=N-1$ for hyperplanes and $n=0$ for points, $$\log ||s'||_{Ch} (\varphi(X))- \log ||s||_{Ch}(X) = - \frac{1}{2} d (N-1) \sum_{j=1}^N \frac{1}{j} -   d\int_{S(\mathbb C^{N+1})^{N+1}} \log |v_1 \wedge \dots \wedge v_N|dx $$ so by letting $$C' = \frac{1}{2} (N-1) \sum_{j=1}^N \frac{1}{j} + \int_{S(\mathbb C^{N+1})^{N}} \log | v_1 \wedge \dots \wedge v_N|dx $$ we get what was claimed except we must check that $C'>0$.
\vspace{0.1in}

\noindent To check this, observe that $\log |v_1 \wedge \dots \wedge v_N|$ is the sum of $i$ from $1$ to $N$ of the log of the length of $v_i$ projected onto the perpendicular subpaces of $(v_1,\dots, v_{i-1})$. Because $|v_1|=1$, the first term vanishes. Clearly the integral over all $v_i$ of that logarithm does not depend on the particular subspace. So we can take the subspace to be generated by the first $i$ basis vectors and we have

\begin{align*} \int_{S(\mathbb C^{N+1})^{N}} \log | v_1 \wedge \dots \wedge v_N|dx &= \sum_{i=2}^N  \int_{(x_0,\dots,x_N) \in S(\mathbb C^{N+1})}\frac{1}{2}  \log (\sum_{j=i-1}^N |x_j|^2) dx \\ &> \sum_{i=2}^N  \int_{(x_0,\dots,x_N) \in S(\mathbb C^{N+1})} \frac{1}{2} \log  |x_N|^2  dx \\ &=- \sum_{i=2}^N \frac{1}{2} \sum_{j=1}^N \frac{1}{j} = -\frac{1}{2} (N-1) \sum_{j=1}^N \frac{1}{j} \end{align*}The last step above is a computation of Stoll from the proof of Theorem 1.6 in \cite{Z}. It's now clear that $C' > 0$ and we're done.
\end{proof}
\bibliographystyle{plain}
\bibliography{biblio}
\end{document}